\newtheorem{theorem}{Theorem}[section]
\newtheorem{lemma}[theorem]{Lemma}
\newtheorem{proposition}[theorem]{Proposition}
\newtheorem{fact}[theorem]{Fact}
\theoremstyle{remark}
\newtheorem{remark}[theorem]{Remark}
\theoremstyle{definition}
\newtheorem{definition}[theorem]{Definition}
\newtheorem{example}[theorem]{Example}
\newtheorem{question}[theorem]{Question}
\numberwithin{equation}{section}
\newcommand{\p}{\varphi}
\newcommand{\U}{\mathcal{U}}
\renewcommand{\leq}{\leqslant}
\renewcommand{\geq}{\geqslant}
\DeclareMathOperator{\ims}{ims}
\DeclareMathOperator{\Lev}{Lev}
\DeclareMathOperator{\htte}{ht}
\DeclareMathOperator{\cf}{cf}
\newcommand{\cw}{\tau_{\rm cw}}
\newcommand{\ccw}{\tau_{\sigma\text{\rm-cw}}}
\newcounter{smallromans}
\newenvironment{romanenumerate}
{\begin{list}{{\normalfont\textrm{(\roman{smallromans})}}}%
  {\usecounter{smallromans}\setlength{\itemindent}{0cm}%
   \setlength{\leftmargin}{5.5ex}\setlength{\labelwidth}{5.5ex}%
   \setlength{\topsep}{.5ex}\setlength{\partopsep}{.5ex}%
   \setlength{\itemsep}{0.1ex}}}%
{\end{list}}
\begin{document}
\title{Weakly Corson compact trees}

\author[T.~Russo]{Tommaso Russo}
\address[T.~Russo]{Institute of Mathematics\\ Czech Academy of Sciences\\ \v{Z}itn\'a 25, 115 67 Prague 1\\ Czech Republic\\ and Department of Mathematics\\Faculty of Electrical Engineering\\Czech Technical University in Prague\\Technick\'a 2, 166 27 Prague 6\\ Czech Republic}
\email{russo@math.cas.cz, russotom@fel.cvut.cz}

\author[J.~Somaglia]{Jacopo Somaglia}
\address[J.~Somaglia]{Dipartimento di Matematica ``F. Enriques'' \\Universit\`a degli Studi di Milano\\Via Cesare Saldini 50, 20133 Milano\\Italy}
\email{jacopo.somaglia@unimi.it}

\dedicatory{In memoriam: Benedetto Braggion (1948--2021)}
\date{\today}
\subjclass[2020]{54D30, 54F05 (primary), and 54A10, 54G20 (secondary).}
\keywords{Weakly Corson compacta, Valdivia compacta, coarse wedge topology, countably coarse wedge topology, tree} 
\thanks{T.~Russo was supported by the GA\v{C}R project 20-22230L; RVO: 67985840. J.~Somaglia was supported  by Universit\`a degli Studi di Milano. Both authors were also supported by Gruppo Nazionale per l'Analisi Matematica, la Probabilit\`a e le loro Applicazioni (GNAMPA) of Istituto Nazionale di Alta Matematica (INdAM), Italy.}

\begin{abstract} We introduce and study a new topology on trees, that we call the countably coarse wedge topology. Such a topology is strictly finer than the coarse wedge topology and it turns every chain complete, rooted tree into a Fr\'echet--Urysohn, countably compact topological space. We show the r\^{o}le of such topology in the theory of weakly Corson and weakly Valdivia compacta. In particular, we give the first example of a compact space $T$ whose every closed subspace is weakly Valdivia, yet $T$ is not weakly Corson. This answers a question due to Ond\v{r}ej Kalenda.
\end{abstract}
\maketitle

\section{Introduction}

A compact topological space is \textit{Corson} if it is homeomorphic to a subset of 
\begin{equation*}
\Sigma(\Gamma)=\{x\in [0,1]^{\Gamma}\colon|\{\gamma\in \Gamma\colon x(\gamma)\neq 0\}|\leq \omega\},
\end{equation*}
for some set $\Gamma$, where $[0,1]^\Gamma$ is endowed with the product topology. A compact space $K$ is \textit{Valdivia} if there is a homeomorphic embedding $h\colon K \to [0,1]^{\Gamma}$ such that $h^{-1}(\Sigma(\Gamma))$ is dense in $K$; in this case, $h^{-1}(\Sigma(\Gamma))$ is called a \textit{$\Sigma$-subset} of $K$. Corson and Valdivia compacta are well-investigated classes of compact spaces for their interest in descriptive topology and nonseparable Banach space theory, see, \emph{e.g.}, \cite{DG, KKL, K2, K3} and references therein. While the class of Corson compacta is a well-rounded class that is stable under continuous images \cite{MR}, Valdivia compacta lack this stability property (indeed, Valdivia compacta are not even stable under open continuous images, \cite{KuUs}). Moreover, it is easily seen that Valdivia compacta are not closed under taking closed subspaces. For these reasons, the classes of Valdivia compacta and of their continuous images are harder to investigate.

A topological space is a \textit{Corson countably compact} if it is homeomorphic to a countably compact subset of $\Sigma(\Gamma)$, for some $\Gamma$. A compact space that is a continuous image of a Corson  countably compact is a \textit{weakly Corson compact}, \cite{K1}. As it turns out, a $\Sigma$-subset of a Valdivia compactum is a Corson countably compact and, vice versa, the \v{C}ech--Stone compactification of a Corson countably compact $X$ is a Valdivia compact, having $X$ as a $\Sigma$-subset, \cite[p.~7]{K2}. Therefore, Corson countably compact spaces are tightly connected to Valdivia compacta and weakly Corson compacta to continuous images of Valdivia compacta (called \textit{weakly Valdivia} in \cite{K1}). This was the motivation for their introduction in \cite{K1}, where the following open problem is raised.

\begin{question}[Kalenda, \cite{K1}]\label{q: main question}
Let $K$ be a compact space such that each closed subset of $K$ is weakly Valdivia. Is $K$ weakly Corson?
\end{question}

The motivation behind this question is that in the context of Valdivia compacta, the space $[0,\omega_1]$ is a simple example of a compactum whose every closed subset is Valdivia, but which is not Corson. However, it turns out that ordinal segments can't provide counterexamples to Question \ref{q: main question}, \cite[Theorem 3.5]{K1} and subtler counterexamples are to be sought.\smallskip

The above question was the main motivation at the origin of our research and we do solve it in the negative in our note (Example \ref{e: binaryTree}). Our counterexample is the full binary tree of height $\omega_1 +1$, endowed with the coarse wedge topology. While the construction of the example is extremely simple, a certain amount of work is needed in order to show that it admits the desired properties. Given the need to study Corson countably compact spaces, an important part of our paper consists in the introduction of a topology, that we call the \textit{countably coarse wedge topology}, that makes every chain complete, rooted tree into a countably compact topological space. This topology is a variation of the coarse wedge topology and we also study in some detail the connection between these topologies.\smallskip

In Section \ref{s: tree} we recall basic notions on trees and we prove that any tree of height $\omega_1+1$ endowed with the coarse wedge topology is hereditarily Valdivia, improving \cite[Theorem 4.1]{S2}. The core of the article are Sections \ref{s: ccw} and \ref{s: wct}, where we introduce the countably coarse wedge topology and we show the importance of such topology for the study of weakly Corson and weakly Valdivia compact trees. Finally, Section \ref{s: gdelta} contains some remarks on $G_\delta$ points in weakly Corson compact trees.

All topological spaces are assumed to be Hausdorff and completely regular. Recall that a topological space $X$ is \textit{countably compact} if every countable open cover of $X$ has a finite subcover; equivalently, if every infinite subset of $X$ has an accumulation point. Further, $X$ is \textit{Fr\'{e}chet--Urysohn} if for every $A\subseteq X$ and $x\in \overline{A}$ there is a sequence $(x_{n})_{n\in\omega}\subseteq A$ that converges to $x$. We say that $A\subseteq X$ is \textit{countably closed} if $\overline{C}\subseteq A$ for every countable subset $C$ of $A$. Finally, $\beta X$ is the \v{C}ech--Stone compactification of a completely regular space $X$.

\section{Trees and Valdivia compacta}\label{s: tree}
A \textit{tree} is a partially ordered set $(T,\leq)$ such that the set of predecessors $\{s\in T\colon s< t\}$ of any $t\in T$ is well-ordered by $<$. A tree $T$ is said to be \textit{rooted} if it has only one minimal element (denoted by $0_T$), called \textit{root}. For a tree $T$ and $t\in T$, we write $\hat{t}=\{s\in T\colon s\leq t\}$. A subset $S$ of $T$ is an \textit{initial part} of $T$ if $\hat{t}\subseteq S$ for every $t\in S$ (in other words, $S$ is downward closed). A \textit{chain} in $T$ is a totally ordered subset of $T$; $T$ is called \textit{chain complete} if every chain has a supremum. For an element $t\in T$, $\htte(t,T)$ denotes the order type of $\{s\in T\colon s< t\}$. Given an ordinal $\alpha$, the set $\Lev_{\alpha}(T)=\{t\in T\colon \htte(t,T)=\alpha\}$ is called the \textit{$\alpha$-th level} of $T$. The \textit{height} of $T$, denoted by $\htte(T)$, is the least $\alpha$ such that $\Lev_{\alpha}(T)=\emptyset$. For an element $t\in T$, $\cf(t)$ denotes the cofinality of $\htte(t,T)$, where $\cf(t)=0$ if $\htte(t,T)$ is a successor ordinal or $\htte(t,T)=0$. We write $\cf(t,T)$ in case it is important to specify the tree $T$. Moreover, $\ims(t)=\{s\in T\colon t\leq s, \,\htte(s,T)=\htte(t,T)+1\}$ denotes the set of \textit{immediate successors} of $t$. A rooted and chain complete tree $T$ is a \textit{full binary tree} if $|\ims(t)|=2$ for every $t\in T$ with $\htte(t,T)+1 <\htte(T)$. Finally, borrowing the notation from \cite{K4} we denote $I(T)=\{t\in T\colon  \cf(t)<\omega\}$.

For $t\in T$ we put $V_t=\{s\in T\colon s\geq t\}$. The \textit{coarse wedge topology} on a tree $T$ is the one whose subbase is given by the sets $V_t$ and their complements, where $t\in I(T)$.
We denote the coarse wedge topology by $\cw$; for further information on it, we refer to \cite{K4, S2, S3}. Let us mention in passing that the coarse wedge topology also coincides with the path topology, \cite{G1, T1}. Recall that a tree $T$ is compact Hausdorff in the coarse wedge topology if and only if $T$ is chain complete and it has finitely many minimal elements, \cite[Corollary 3.5]{N1}. For this reason, from now on we will only consider chain complete, rooted trees. Importantly, this assumption assures that every two elements $s,t\in T$ admit an infimum $s\wedge t$; indeed, $s\wedge t=\max(\hat{s}\cap\hat{t})$. The following fact is standard (see, \emph{e.g.}, the proof of \cite[Lemma 3.5]{S3}).

\begin{fact}\label{f: Vt disjoint from closure}
Let $(T,\cw)$ be a tree and $t\in T$ be such that $\cf(t)\geq\omega_1$. Then $T\setminus V_t$ is countably closed.
\end{fact}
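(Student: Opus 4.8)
The plan is to prove the statement directly: given a countable $C \subseteq T \setminus V_t$ and a point $x \in \overline{C}$, I want to show $x \notin V_t$. So I assume towards a contradiction that $x \geq t$. Note that $V_t$ itself need not be open, since $\cf(t) \geq \omega_1$ forces $t \notin I(T)$; the strategy is therefore to exhibit a \emph{subbasic} open set of the form $V_{t_0}$, with $t_0 \in I(T)$, which contains $x$ and is disjoint from $C$, contradicting $x \in \overline{C}$.

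To locate $t_0$, I would examine the infima $c \wedge t$ for $c \in C$ (these exist since $T$ is chain complete and rooted). For each $c \in C$ we have $c \not\geq t$, hence $c \wedge t \neq t$ (otherwise $t \leq c$), so $c \wedge t < t$ and therefore $\htte(c \wedge t, T) < \htte(t, T)$. This is where the hypothesis on $\cf(t)$ enters: $\htte(t,T)$ is a limit ordinal of uncountable cofinality, so the countably many ordinals $\htte(c \wedge t, T)$, $c \in C$, have a supremum $\gamma$ with $\gamma < \htte(t,T)$, and hence also $\gamma + 1 < \htte(t,T)$. Let $t_0$ be the unique predecessor of $t$ with $\htte(t_0, T) = \gamma + 1$. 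Since $\gamma + 1$ is a successor ordinal, $\cf(t_0) = 0$, so $t_0 \in I(T)$; and $t_0 < t \leq x$ gives $x \in V_{t_0}$, so $V_{t_0}$ is an open neighbourhood of $x$.

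Finally I would check that $V_{t_0} \cap C = \emptyset$: if some $c \in C$ satisfied $t_0 \leq c$, then together with $t_0 \leq t$ this would give $t_0 \leq c \wedge t$, whence $\gamma + 1 = \htte(t_0, T) \leq \htte(c \wedge t, T) \leq \gamma$, which is absurd. Thus $V_{t_0}$ separates $x$ from $C$, the desired contradiction, so $\overline{C} \subseteq T \setminus V_t$. I do not expect any real obstacle: the only mildly delicate points are the observation that a countable supremum of ordinals below a limit ordinal of uncountable cofinality stays strictly below it (which is exactly what $\cf(t) \geq \omega_1$ buys us), and the fact that levels indexed by successor ordinals automatically lie in $I(T)$, so that the separating set $V_{t_0}$ is genuinely open.
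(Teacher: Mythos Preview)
Your proof is correct and follows essentially the same route as the paper: both arguments form the infima $c\wedge t<t$ for $c$ in the countable set, use $\cf(t)\geq\omega_1$ to bound their heights strictly below $\htte(t,T)$, and then pick a separating $p\in I(T)$ above that bound so that $V_p$ is an open neighbourhood disjoint from the set. The only cosmetic difference is that the paper first disposes of points in $V_t\setminus\{t\}$ by noting that $\bigcup_{r\in\ims(t)}V_r$ is open, whereas your single neighbourhood $V_{t_0}$ handles every $x\geq t$ at once.
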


\begin{proof} Fix a countable subset $S$ of $T\setminus V_t$; we will prove that $V_t\cap \overline{S}=\emptyset$. The set $V_t\setminus\{t\}=\cup_{r\in\ims(t)}V_r$ is $\cw$-open; hence, $V_t\setminus\{t\} \cap \overline{S}=\emptyset$ and we only need to show that $t\notin \overline{S}$. For every $s\in S$, let $t_s\coloneqq t \wedge s < t$ (since $s\notin V_t$). Then the assumption $\cf(t)\geq\omega_1$ gives the existence of $p\in I(T)$ with $\sup t_s <p <t$. Hence, $V_p\cap S=\emptyset$, since $s\geq p$ would imply $t_s =t \wedge s\geq p$ which is false. Since $t\in V_p$ and $V_p$ is $\cw$-open, we are done.
\end{proof}

A tree $T$ is an \textit{$r$-tree} if it satisfies the following condition
\[
|\ims(t)|<\omega \mbox{ whenever } \cf(t)\geq\omega_1.
\]
If, in addition, $|\ims(t)|\leq 1$ whenever  $\cf(t)\geq\omega_1$, then $T$ is an \textit{$r_1$-tree}. By \cite[Theorem 3.1]{S2}, $r$-trees, endowed with the coarse wedge topology, are characterised by having a retractional skeleton\footnote{Since we won't need the notion of retractional skeleton in our paper, we shall refer to \cite{KuMi} for its definition.}. In particular, if $T$ is a Valdivia tree, then $T$ is an $r$-tree. \smallskip

It was proved in \cite[Theorem 4.1]{S2} that every tree $T$ with $\htte(T)\leq\omega_1 +1$ is Valdivia when endowed with the coarse wedge topology. Combining this with \cite[Lemma 2.1]{S3}, we obtain that, if $T$ is a tree with $\htte(T)\leq\omega_1 +1$ and $S\subseteq T$ is closed both in topology and with respect to the $\wedge$-operation, then $S$ is Valdivia as well. In Proposition \ref{p: omega1Valdivia} below we generalise this result dropping the $\wedge$-closedness assumption.

For the proof we need to recall a classical Rosenthal type characterisation of Valdivia compacta. A family $\U$ of subsets of $T$  is \textit{$T_0$-separating} in $T$ if for every two distinct elements $s,t\in  T$ there is $U\in\mathcal{U}$ satisfying $|\{s,t\}\cap U|=1$. Given $s\in T$ we denote by $\U(s)=\{U\in\U \colon s\in U\}$. A family $\mathcal{U}$ is \textit{point countable on $D\subseteq T$} if $|\U(s)|\leq \omega$ for every $s\in D$. Then a compact $T$ is Valdivia if and only if there are a dense subset $D$ of $T$ and a family $\U$ of open $F_\sigma$ subsets of $T$ which is $T_0$-separating in $T$ and point countable on $D$. Moreover, in this case, $D$ is a $\Sigma$-subset of $T$ (see, for instance, \cite[Proposition 1.9]{K2}).\smallskip

In the proof of Proposition \ref{p: omega1Valdivia} we also need the following lemma.
\begin{lemma}\label{l: closedunbdd}
Let $T$ be a tree endowed with the coarse wedge topology and $t\in T$ be such that $\cf(t)\geq \omega_1$. Assume that there is $S\subseteq T$ such that $t$ is an accumulation point of $S\setminus V_t$. Then $\{s\in T\colon s<t\}$ contains a closed and unbounded set of accumulation points of $S$.
\end{lemma}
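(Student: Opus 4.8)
The plan is to analyse the trace on $\hat{t}\setminus\{t\}=\{s\in T\colon s<t\}$ of the points of $S$ witnessing that $t$ is an accumulation point, and show that the relevant set of $\wedge$-projections is cofinal, hence its closure is closed and unbounded in the chain $\hat t\setminus\{t\}$, which has cofinality $\cf(t)\geq\omega_1$. Concretely, for each $s\in S\setminus V_t$ we set $t_s\coloneqq t\wedge s<t$ as in the proof of Fact \ref{f: Vt disjoint from closure}, and we let $A=\{t_s\colon s\in S\setminus V_t\}\subseteq\hat t\setminus\{t\}$. The first step is to check that $A$ is cofinal in $\hat t\setminus\{t\}$: if it were bounded by some $q<t$, then (using $\cf(t)\geq\omega_1$) we could pick $p\in I(T)$ with $q<p<t$, and then $V_p$ would be a $\cw$-open neighbourhood of $t$ meeting $S\setminus V_t$ only in points $s$ with $t_s\geq p>q$, contradicting boundedness — in fact $V_p\cap(S\setminus V_t)=\emptyset$, contradicting that $t$ accumulates $S\setminus V_t$. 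So $A$ is cofinal in the chain $\hat t\setminus\{t\}$.

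Next I would pass to the set $C$ of accumulation points of $S$ lying below $t$, and show it contains a closed unbounded subset of $\hat t\setminus\{t\}$. The natural candidate is the set of all $r<t$ such that $r=\sup(A\cap\hat r)$ with the supremum attained as a genuine limit, equivalently the set of limit points of $A$ inside the order topology of the chain $\hat t\setminus\{t\}$; since $A$ is cofinal and $\cf(t)\geq\omega_1$, this set $C_0$ of limit points is club in $\hat t\setminus\{t\}$ by the standard fact that the set of limit points of a cofinal subset of an ordinal of uncountable cofinality is club. It remains to see that every $r\in C_0$ is a $\cw$-accumulation point of $S$. Fix $r\in C_0$ and a basic $\cw$-neighbourhood $W$ of $r$; since $r<t$ and $\cf(r)\geq\omega$ as $r$ is a limit of $A$, we may assume $W=V_p\setminus\bigcup_{i=1}^{n}V_{q_i}$ with $p\in I(T)$, $p\leq r$, and each $q_i\in I(T)$ with $q_i\not\leq r$. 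Because $r=\sup(A\cap\hat r)$ is a limit, there are cofinally many $t_s\in A$ with $p\leq t_s<r$, and in fact infinitely many distinct such $t_s$ (as $\cf(r)\geq\omega$); for such $s$ one has $p\leq t_s\leq s$, so $s\in V_p$. I must also arrange $s\notin V_{q_i}$: since $q_i\not\leq r$ and $q_i\in I(T)$, we have $r\notin V_{q_i}$, and $V_{q_i}$ is $\cw$-closed below $r$ in the sense that... — more carefully, if infinitely many of these $s$ lay in $V_{q_i}$ then $q_i\leq s$ and $q_i\leq t_s<r$ for one of them (comparability in the chain $\hat s$), contradicting $q_i\not\leq r$; so only finitely many $s$ are excluded by each $V_{q_i}$. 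Hence $W\cap S$ is infinite, and since the $t_s$ can be chosen distinct from $r$ (indeed below $r$), $r$ is an accumulation point of $S$. Thus $C_0\subseteq\{s<t\}$ is a club consisting of accumulation points of $S$, which is exactly the assertion.

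The step I expect to be the main obstacle is the verification that each $r\in C_0$ is genuinely a $\cw$-accumulation point of $S$, i.e. that the witnesses $t_s$ translate back into points $s\in S$ landing in an arbitrary basic neighbourhood of $r$. The subtlety is twofold: one must know that $p\leq t_s$ forces $s\in V_p$ (immediate, since $t_s\leq s$), and that the finitely many "forbidden cones" $V_{q_i}$ can exclude only finitely many of the relevant $s$; the latter uses that $q_i\not\leq r$ together with the fact that all the $t_s<r$ lie in the single chain $\hat r$, so $q_i\leq t_s$ would give $q_i\leq r$. A further point to be careful about is whether $r$ itself could be the only accumulation-type witness (it is not, because $A\cap\hat r$ is cofinal in $\hat r$ of uncountable — or at least infinite — cofinality, giving infinitely many distinct $t_s<r$, hence infinitely many distinct $s$). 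Once these cone-membership bookkeeping facts are pinned down, the club-ness of $C_0$ is the routine classical fact about cofinal subsets of ordinals of uncountable cofinality, and the proof concludes.
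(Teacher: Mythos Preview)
Your approach is essentially the paper's: both hinge on the projections $t_s=s\wedge t$ for $s\in S\setminus V_t$, show that these are cofinal below $t$, and then verify that suprema of increasing sequences from this set are $\cw$-accumulation points of $S$. The paper packages the argument slightly differently --- for each $r_0<t$ it builds one specific accumulation point $r>r_0$ via an $\omega$-length interleaving (pick $s_j\in V_{r_j+1}\cap(S\setminus V_t)$, set $r_{j+1}=s_j\wedge t$, let $r=\sup_j r_j$), and then takes the closure of the resulting unbounded set --- whereas you take the whole club $C_0$ of order-limit points of $A$ at once. Both routes are fine.

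There is, however, a genuine (if easily repaired) gap in your verification that each $r\in C_0$ is an accumulation point of $S$. When arguing $s\notin V_{q_i}$, you assert that comparability of $q_i$ and $t_s$ in $\hat{s}$ yields $q_i\leq t_s<r$; but comparability only gives $q_i\leq t_s$ \emph{or} $t_s<q_i$, and the second alternative can occur when $q_i$ is incomparable with $t$. In that case one merely gets $t_s\leq q_i\wedge t$, a fixed bound below $t$, so infinitely many of your chosen $s$ may well lie in $V_{q_i}$ --- your claim ``only finitely many $s$ are excluded by each $V_{q_i}$'' is false as stated. The cleanest fix is to use the standard local base at $r$ with $F\subseteq\ims(r)$ (exactly as the paper does): then each $q_i>r$, and for any $s$ with $t_s<r$ the assumption $q_i\leq s$ forces $r<q_i\leq s$ together with $r<t$, whence $r\leq s\wedge t=t_s<r$, a contradiction. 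Hence \emph{no} such $s$ lies in $V_{q_i}$ and the argument goes through without any finiteness bookkeeping.
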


\begin{proof} 
We shall show that $\{s\in T\colon s<t\}$ contains an unbounded subset that consists of accumulation points of $S$; then the closure of such subset is the desired closed and unbounded set. For $r\in T$, $r< t$ we denote by $r+1$ the unique element such that $\{r+1\}= \hat{t}\cap \ims(r)$. Fix arbitrarily $r_0<t$. By assumption, we can find $s_0\in V_{r_0+1}\cap (S\setminus V_t)$ and we set $r_1 \coloneqq s_0\wedge t <t$ (since $s_0\notin V_t$). Then we pick $s_1\in V_{r_1+1}\cap (S\setminus V_t)$ and we let $r_2\coloneqq s_1\wedge t <t$. Continuing by induction, we obtain two sequences $(r_j)_{j\in\omega}\subseteq \{s\in T\colon s<t\}$ and $(s_j)_{j\in\omega}\subseteq S\setminus V_t$ with $s_j\in V_{r_j+1}\cap (S\setminus V_t)$ and $r_{j+1}= s_j\wedge t$. The sequence $(r_j)_{j\in\omega}$ is strictly increasing since $r_{j+1}\geq r_j+1>r_j$. Hence, from the assumption that $\cf(t)\geq\omega_1$ we obtain $r\coloneqq \sup r_j <t$. 

We show that $r \in \{s\in T\colon r_0<s<t\}$ is an accumulation point for $S$, which concludes the proof. Take any basic neighbourhood $W_u^F\coloneqq V_u\setminus \bigcup \{V_p\colon p\in F\}$ of $r$, where $u<r$ and $F\subseteq \ims(r)$ is a finite set. Then, there exists $j\in\omega$ such that $u<r_j$, so $s_j\in V_u$. Moreover, $s_j \notin V_p$ for every $p\in \ims(r)$: indeed, $s_j\in V_p$ would imply $s_j\geq p \geq r$, so $r_j=s_j\wedge t\geq r$, a contradiction. Therefore, $s_j\in W_u^F$, hence $r$ is an accumulation point for $S$.
\end{proof}

\begin{proposition}\label{p: omega1Valdivia} Let $T$ be a tree of height at most $\omega_1+1$ endowed with the coarse wedge topology. Then every closed subset of $T$ is a Valdivia compact space.
\end{proposition}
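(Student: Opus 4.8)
The plan is to verify the Rosenthal-type characterisation of Valdivia compacta recalled above. Given a closed (hence compact) subset $S\subseteq T$, I would produce a dense $D\subseteq S$ together with a family $\U$ of open $F_\sigma$ — in fact clopen — subsets of $S$ which is $T_0$-separating in $S$ and point countable on $D$. The starting point is the observation that, because $\htte(T)\le\omega_1+1$, every element of level $\omega_1$ (if any) is a maximal element $t$ with $V_t=\{t\}$, $\ims(t)=\emptyset$ and $\cf(t)=\omega_1$, whereas every element of level $<\omega_1$ has countable predecessor set, so that $\hat t\cap I(T)$ is countable. Writing $M\coloneqq\Lev_{\omega_1}(T)$, the natural family $\{V_t\cap S\colon t\in I(T)\}$ of clopen subsets of $S$ is already $T_0$-separating in $S$, and the only obstruction to its point countability comes from the points of $S\cap M$, which lie on uncountably many of the sets $V_t$. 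So I would let $M_{\mathrm{iso}}$ be the set of points of $S\cap M$ that are isolated in $S$ and set $D\coloneqq(S\setminus M)\cup M_{\mathrm{iso}}$.

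The first step is to check that $D$ is dense; the only thing to see is that each non-isolated $s\in S\cap M$ lies in $\overline D$. Since $s$ is maximal, $S\setminus V_s=S\setminus\{s\}$, and $s$ failing to be isolated means precisely that $s$ is an accumulation point of $S\setminus V_s$; as $\cf(s)=\omega_1$, Lemma \ref{l: closedunbdd} furnishes a closed unbounded set $C_s\subseteq\{r\in T\colon r<s\}$ of accumulation points of $S$. Since $S$ is closed, $C_s\subseteq S$, and since the elements of $C_s$ have height $<\omega_1$ we have $C_s\subseteq S\setminus M\subseteq D$. Now the basic neighbourhoods of the maximal element $s$ are the sets $V_u$ with $u\in\hat s\cap I(T)$, and as $C_s$ is cofinal in the branch $\hat s$, each of them meets $C_s$; hence $s\in\overline D$ and $D$ is dense in $S$.

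The second and main step is the construction of $\U$, and the hard part is precisely to accommodate the points of $M_{\mathrm{iso}}$, which are forced into $D$ yet lie on uncountably many sets $V_t\cap S$. The key remark is that for $d\in M_{\mathrm{iso}}$ there is, by isolation, some $u\in\hat d\cap I(T)$ with $V_u\cap S=\{d\}$, and then $V_t\cap S=\{d\}$ for every $t\in\hat d\cap I(T)$ with $t\ge u$; writing $U_d$ for the set of all such $t$, it is an up-set of the well-ordered chain $\hat d\cap I(T)$, so its complement there is a proper initial segment and hence countable, being contained in the branch below $\min U_d$, an element of height $<\omega_1$. I would then set
\[
\U\coloneqq\bigl\{V_t\cap S\colon t\in I(T),\ t\notin\textstyle\bigcup_{d\in M_{\mathrm{iso}}}U_d\bigr\}\cup\bigl\{\{d\}\colon d\in M_{\mathrm{iso}}\bigr\},
\]
a family of clopen subsets of $S$. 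Point countability on $D$ is then immediate: a point of $S\setminus M$ lies in $V_t\cap S$ only for $t$ in the countable set $\hat t\cap I(T)$, while $d\in M_{\mathrm{iso}}$ lies in $\{d\}$ and, among the other members of $\U$, only in the sets $V_t\cap S$ with $t\in(\hat d\cap I(T))\setminus U_d$, again countably many.

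For $T_0$-separation I would argue as follows: given $s\ne s'$ in $S$ with $m=s\wedge s'$, if one of them — say $s$ — lies in $M_{\mathrm{iso}}$ then $\{s\}$ separates them; otherwise I take $t$ to be the immediate successor of $m$ on the branch towards $s$ (or towards $s'$ when $m=s$), so that $t\in I(T)$ and $V_t\cap S$ contains exactly one of $s,s'$; moreover $t\notin\bigcup_d U_d$, for otherwise $V_t\cap S$ would be a singleton $\{d\}$ with $d\in M_{\mathrm{iso}}$, which would force $s$ (resp.\ $s'$) to equal $d$, contradicting the choice. Together with the compactness of $S$, the Rosenthal-type characterisation then yields that $S$ is Valdivia (with $D$ a $\Sigma$-subset). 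I expect the treatment of the isolated points of $S$ lying at level $\omega_1$ to be the only genuinely delicate point; the remaining verifications are routine manipulations with the definitions and Lemma \ref{l: closedunbdd}.
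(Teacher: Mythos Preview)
Your proof is correct and follows essentially the same approach as the paper's: both arguments verify the Rosenthal-type characterisation by taking as dense set the points of $S$ of height $<\omega_1$ together with the isolated points of $S$ at level $\omega_1$ (your $M_{\mathrm{iso}}$ coincides with the paper's $S_1=S\setminus\overline{D\cap S}$), invoking Lemma~\ref{l: closedunbdd} to see that the non-isolated top-level points are limits of lower-level points of $S$, and building $\U$ from the natural clopen cones $V_t\cap S$ after removing those that witness the isolation of some $d\in M_{\mathrm{iso}}$ and adding back the singletons $\{d\}$. The only cosmetic differences are that the paper first disposes of the case $M_{\mathrm{iso}}=\emptyset$ by quoting that $D$ is already a $\Sigma$-subset of $T$, and that its excluded set $\bigcup_{s\in S_1}V_{t(s)}$ is slightly larger than your $\bigcup_d U_d$; neither affects the argument.
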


\begin{proof} Let $D=\{t\in T\colon \htte(t,T)<\omega_1\}$. According to \cite[Theorem 4.1]{S2}, $(T,\cw)$ is Valdivia; moreover, \cite[Theorem 2.3]{S3} yields that $D$ is a $\Sigma$-subset for $T$. If $D\cap S$ is dense in $S$, then it is a $\Sigma$-subset for $S$ and $S$ is Valdivia. Therefore, we can assume that the set
\[
S_1\coloneqq S\setminus \overline{D\cap S}
\]
is nonempty. 

We first prove that $S_1$ is a discrete set with the subspace topology. Indeed, suppose by contradiction that $s\in S_1$ is an accumulation point for $S_1$; then $s$ is an accumulation point for $S$ as well. Since $\cf(s)=\omega_1$, by Lemma \ref{l: closedunbdd} $\{r\in T\colon r<s\}\subseteq D$ contains a closed and unbounded set of accumulation points for $S$, hence of points of $S$ ($S$ being closed). This yields that $s\in \overline{D\cap S}$, which is a contradiction since $S_1\cap \overline{D\cap S}=\emptyset$.

Now it remains to show that $S$ is Valdivia. Since $S_1$ is discrete, for every $s\in S_1$ there exists $t(s)\in I(T)$, $t(s)\leq s$ such that
\begin{romanenumerate}
    \item $V_{t(s)}\cap \overline{D\cap S}=\emptyset$;
    \item $V_{t(s)}\cap S_1=\{s\}$.
\end{romanenumerate}
Let $\U_1=\{V_r \colon r\in I(T), r\notin \bigcup_{s\in S_1}V_{t(s)}\}$ and $\U=\{V\cap S\colon V\in \U_1\}\cup\{\{s\}\colon s\in S_1\}$. We shall show that $\U$ and the dense subset $S_1 \cup (D\cap S)$ of $S$ witness that $S$ is Valdivia, by using \cite[Proposition 1.9]{K2} quoted above. First of all, every set in $\U$ is clopen.

Next, we prove that $\U$ is $T_0$-separating on $S$. Pick distinct $s,t\in T$. If $s\in S_1$, then $\{s\}\in\U$ witnesses the $T_0$-separation (and the same if $t\in S_1$). Hence, we can assume $s,t\in \overline{D\cap S}$ and we distinguish two cases. If $s<t$ (and similarly if $t<s$), we pick $r\in I(T)$ with $s< r\leq t$; then $V_r\cap\{s,t\}=\{t\}$. If $s,t$ are incomparable, by chain completeness there is $r\in I(T)$ with $r\leq t$ and such that $s$ and $r$ are incomparable. Hence, $V_r\cap\{s,t\}=\{t\}$. In either case, we have $V_r\cap \overline{D\cap S}\neq\emptyset$, whence $r\notin \bigcup_{s\in S_1}V_{t(s)}$ by (i). Consequently $V_r\cap S\in\U$, showing that $\U$ is $T_0$-separating on $S$.

Let us finally show that $\U$ is point countable on $S_1 \cup (D\cap S)$. If $s\in D\cap S$ and $U\in\U(s)$, then there is $t\leq s$ such that $U=V_t\cap S$. Hence, $\htte(t,T)< \omega_1$ gives that $\U(s)$ is countable. Similarly, let $s\in S_1$ and $U\in \U(s)$; then either $U=\{s\}$ or $U=V_r\cap S$ for some $r< t(s)$. Since $\htte(t(s),T)<\omega_1$, $\U(s)$ is a countable family.
\end{proof}

\begin{remark} The reason of the assumption $\htte(T)\leq\omega_1 +1$ in the above result is that our construction of the family $\U$ only works in this case; on the other hand, the first part of the argument, showing that $S_1$ is discrete extends to Valdivia trees of every height. However, a substantially more technical way to build a $T_0$-separating and point countable family $\U$ on some trees of bigger height was found in \cite[Theorem 3.2]{S3} (see also \cite[Proposition 5.31]{K4}). It seems conceivable that trees as in \cite[Theorem 3.2]{S3} actually are hereditarily Valdivia. However, we decided not to enter such direction, as it would be a rather lengthy detour and we only need Proposition \ref{p: omega1Valdivia} for height $\omega_1+1$.
\end{remark}

\section{The countably coarse wedge topology}\label{s: ccw}

In this section we introduce and study some basic properties of the countably coarse wedge topology on trees which stands in between the coarse wedge topology and the fine wedge topology (we refer to \cite{N1} for details on the fine wedge topology).
\begin{definition} The \textit{countably coarse wedge topology} on a tree $T$, denoted by $\ccw$, is the one whose subbase is the set of all $V_t$ and their complements, where $\cf(t)\neq \omega$.
\end{definition}

In particular, a local base at each point $t\in T$ can be described as follows. If $\cf(t)\neq \omega$ a local base at $t$ is formed by all sets of the form

\begin{equation*}
W_{t}^{F}=V_t\setminus \bigcup\{V_s\colon s\in F\},
\end{equation*}
where $F$ is a finite set of immediate successors of $t$. In case $\cf(t)=\omega$, a local base at $t$ is formed by sets of the form

\begin{equation*}
W_{s}^{F}=V_s\setminus \bigcup\{V_r\colon r\in F\},
\end{equation*}
where $s<t$, $s\in I(T)$, and $F$ is a finite set of immediate successors of $t$.

\begin{proposition}\label{p: basicsigmaprop}
Let $T$ be a tree endowed with the $\ccw$-topology. Then  $(T,\ccw)$ is countably compact and Fr\'{e}chet--Urysohn.
\end{proposition}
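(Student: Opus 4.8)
I want to prove that $(T,\ccw)$ is both countably compact and Fréchet--Urysohn, for any chain complete, rooted tree $T$. The key structural feature of $\ccw$ that makes this work is that points of cofinality $\omega$ — which are precisely the "problematic" points for countable compactness in the coarse wedge topology — now have a neighbourhood base indexed by initial segments below them, exactly as limit points of cofinality $\omega$ in an ordinal segment do. So a supremum of a strictly increasing $\omega$-sequence in $T$ should be the limit of that sequence in $\ccw$, which is what will give both properties.

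Let me first sketch \emph{countable compactness}. I would use the characterisation via accumulation points: every countably infinite subset $S\subseteq T$ has an accumulation point. Given such an $S$, I want to produce a point of $T$ every $\ccw$-neighbourhood of which meets $S\setminus\{x\}$ in infinitely many points. The natural candidate is built by a greedy/tree-climbing argument: using chain completeness, I try to find a chain $C$ in $T$ such that below every point of $C$ there are infinitely many elements of $S$, and such that $C$ is "maximal" in a suitable sense; then $x=\sup C$ (which exists by chain completeness). One has to check the cofinality of $x$: if $\cf(x)=\omega$, the neighbourhoods $W_s^F$ with $s<x$, $s\in I(T)$ each contain a tail of the climbing sequence, and since infinitely many elements of $S$ lie below $x$ and cannot all be concentrated below a single proper initial segment (by maximality of $C$), each such neighbourhood catches infinitely many elements of $S$; if $\cf(x)\neq\omega$ one argues similarly with the neighbourhoods $V_x\setminus\bigcup\{V_r:r\in F\}$, using that only finitely many immediate successors are removed. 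The one case to isolate is when no infinite chain below such points can be found, i.e. when infinitely many elements of $S$ already share a common node $t$ and lie in $\mathrm{ims}(t)$ or just above — but if $\cf(t)\neq\omega$ then $t$ itself is an accumulation point (any $W_t^F$ removes only finitely many immediate successors of $t$), and if $\cf(t)=\omega$ one climbs one more step; a clean way to package all of this is to argue that the set $A=\{t\in T:|\{s\in S:s\geq t\}|=\omega\}$ is a nonempty chain (downward closed along $\hat{\cdot}$, and linearly ordered because if $t_1,t_2$ were incomparable with infinitely many elements of $S$ above each, those elements would be forced into disjoint cones, which is fine — so actually $A$ need not be a chain; instead take a maximal chain $C$ in $A$ through the root and let $x=\sup C$, then show $x$ works). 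The main obstacle, and the step I'd spend the most care on, is exactly this verification that $x=\sup C$ is genuinely an accumulation point in \emph{every} case of its cofinality, handling correctly the interplay between "infinitely many elements of $S$ above $x$" versus "above a proper predecessor of $x$."

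For the \emph{Fréchet--Urysohn} property, let $A\subseteq T$ and $x\in\overline{A}^{\,\ccw}$; I must extract a sequence in $A$ converging to $x$. I split on $\cf(x)$. If $\htte(x,T)$ is a successor or $0$, or more generally $\cf(x)\neq\omega$ with $\cf(x)<\omega_1$ impossible unless it's $0$, so really: if $\cf(x)\in\{0\}\cup\{\text{uncountable cofinalities}\}$, then the sets $W_x^F=V_x\setminus\bigcup_{s\in F}V_s$ form a neighbourhood base; since $\mathrm{ims}(x)$ may be large this is not first-countable in general, but $x\in\overline A$ forces $A\cap W_x^F\neq\emptyset$ for every finite $F\subseteq\mathrm{ims}(x)$. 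Here I observe that if $x\notin A$ then $A\cap V_x\subseteq\bigcup_{r\in\mathrm{ims}(x)}V_r$, and $x\in\overline{A}$ means this union is "infinite enough": pick distinct $r_1,r_2,\dots\in\mathrm{ims}(x)$ and $a_n\in A\cap V_{r_n}$; this sequence converges to $x$ because any $W_x^F$ with $|F|=k$ contains all $a_n$ with $r_n\notin F$, i.e. all but finitely many. (If $A\cap W_x^\emptyset=A\cap V_x$ contains $x$ or only finitely many cones are hit with $x$ being a limit of them — then $x\in A$, trivial.) If $\cf(x)=\omega$, fix a strictly increasing cofinal sequence $(p_n)_{n\in\omega}$ in $\hat x\cap I(T)$ (possible since $\cf(x)=\omega$ and successor levels lie in $I(T)$). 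The neighbourhoods $W_{p_n}^F$ with $F\subseteq\mathrm{ims}(x)$ finite form a base; since $x\in\overline A$, for each $n$ choose $a_n\in A\cap W_{p_n}^{F_n}$ for a suitable finite $F_n$, and then diagonalise: one checks $(a_n)$ can be chosen so that eventually it enters every $W_{p_m}^F$. The delicate point is the immediate-successor coordinate: $W_{p_n}^{F_n}$ only excludes finitely many immediate successors of $x$, so to converge one needs the chosen $a_n$ to eventually avoid \emph{each fixed} $r\in\mathrm{ims}(x)$ — handled by, at stage $n$, additionally excluding $r_1,\dots,r_n$ where $(r_k)$ enumerates the finitely-or-countably-many immediate successors of $x$ that get picked, or just noting $a_n\to x$ directly because any basic neighbourhood $W_{p_m}^F$ of $x$ contains $a_n$ once $n\geq m$ and $F_n\supseteq F\cap\{\text{relevant successors}\}$, which we arrange. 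I expect the Fréchet--Urysohn part to be more routine than countable compactness, the only genuine subtlety being the bookkeeping when $\mathrm{ims}(x)$ is uncountable.

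Throughout, I'd lean on two facts already visible in the paper's setup: that $\hat t=\{s:s\leq t\}$ is well-ordered so successor levels of $T$ are contained in $I(T)$ (hence points of cofinality $\omega$ really do have cofinal chains inside $I(T)$), and that any two elements have an infimum (chain completeness + rootedness), which is used implicitly whenever I argue that elements of $S$ or $A$ lying in incomparable cones cannot simultaneously approximate the same point. I would state and use, if convenient, the trivial observation that $V_t$ is $\ccw$-closed whenever $\cf(t)\neq\omega$ and $\ccw$-open always (since $V_t\setminus\{t\}=\bigcup_{r\in\mathrm{ims}(t)}V_r$ is a union of subbasic opens, or $V_t$ itself is subbasic open when $\cf(t)\neq\omega$), together with $V_t\setminus\{t\}$ being open for \emph{every} $t$ — this is what makes the "immediate successor coordinate" of every neighbourhood behave like a cofinite condition. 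No separate argument for Hausdorffness is needed here since the proposition only asserts countable compactness and the Fréchet--Urysohn property, but if desired it follows from the same subbasis.
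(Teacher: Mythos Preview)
Your Fr\'echet--Urysohn argument is essentially the paper's, though the paper avoids your diagonalisation in the case $\cf(t)=\omega$ by splitting once more: if infinitely many $r\in\ims(t)$ satisfy $V_r\cap A\neq\emptyset$ one proceeds exactly as when $\cf(t)\neq\omega$, while if only a finite set $F$ of such $r$ exists one simply picks $s_n\in W_{t_n}^F\cap A$ along a cofinal sequence $(t_n)_{n\in\omega}$ in $I(T)$ below $t$. No bookkeeping with growing $F_n$'s is needed.

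For countable compactness your route is genuinely different. The paper does not climb the tree at all: it uses that $(T,\cw)$ is already compact, takes a $\cw$-cluster point $s$ of the given sequence, and checks that $s$ remains a cluster point in $\ccw$. When $\cf(s)\leq\omega$ the local bases coincide; when $\cf(s)\geq\omega_1$, Fact~\ref{f: Vt disjoint from closure} (the countable closedness of $T\setminus V_s$) finishes the verification in two lines. Your maximal-chain construction can be made to work and is more self-contained, but the obstacle you flag---whether $x=\sup C$ is really an accumulation point in every cofinality case---is not resolved in your sketch and is a genuine gap as written. Concretely: if $\cf(x)\geq\omega_1$ while $V_x\cap S$ happens to be finite, every basic $\ccw$-neighbourhood $W_x^F$ of $x$ lies inside $V_x$ and hence meets $S$ only finitely, so $x$ is \emph{not} an accumulation point and your argument stalls. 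You must rule this configuration out, and the way to do so is precisely the computation behind Fact~\ref{f: Vt disjoint from closure}: the countably many meets $s\wedge x$ for $s\in S\setminus V_x$ are bounded below $x$, which forces $V_c\cap S\subseteq V_x\cap S$ for some $c<x$ and contradicts $c\in C\subseteq A$. So both proofs ultimately hinge on the same lemma; the paper simply reaches it faster by leaning on the compactness of $(T,\cw)$ instead of rebuilding a cluster point from scratch.
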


\begin{proof} Let $(s_n)_{n\in\omega}\subseteq T$ be an injective sequence. In order to prove that $(T,\ccw)$ is countably compact it is enough to show that $(s_n)_{n\in\omega}$ has a cluster point. Since $T$ endowed with the coarse wedge topology is compact, the sequence $(s_n)_{n\in\omega}$ has a cluster point $s$ in $(T,\cw)$. We show that $s$ is also a cluster point for $(s_n)_{n\in\omega}$ in $(T,\ccw)$. If $\cf(s)\leq \omega$, the local bases at $s$ in $\ccw$ and in the coarse wedge topology coincide, whence $s$ is a $\ccw$-cluster point of $(s_n)_{n\in\omega}$. If $\cf(s)\geq \omega_1$, we argue by contradiction and we assume that $s$ is not a $\ccw$-cluster point of $(s_n)_{n\in\omega}$. Then there exists a finite set $F\subseteq \ims(s)$ such that $W_{s}^{F}\cap (s_n)_{n\in\omega}=\emptyset$. By Fact \ref{f: Vt disjoint from closure}, there is $p\in I(T)$ with $p\leq s$ such that $V_p$ is disjoint from the countable set $(T\setminus V_s)\cap (s_n)_{n\in\omega}$. Then $W_p^F$ is an open neighbourhood of $s$ in the coarse wedge topology and it does not intersect $(s_n)_{n\in\omega}$, a contradiction. This shows that $(T,\ccw)$ is countably compact.\smallskip

Next, we prove that $(T,\ccw)$ is Fr\'{e}chet--Urysohn. Let $A\subseteq (T,\ccw)$ and $t\in \overline{A}\setminus A$. In order to build a sequence $(s_n)_{n\in\omega}$ in $A$ which converges to $t$, we distinguish three cases.
\begin{itemize}
    \item If $\cf(t)\neq \omega$, then $W_t^F\cap A\neq\emptyset$ for every finite set $F\subseteq \ims(t)$. Hence, there exists a countable subset $(t_n)_{n\in\omega}\subseteq \ims(t)$ such that $V_{t_n}\cap A\neq\emptyset$. If we pick $s_n\in V_{t_n}\cap A$, then $(s_n)_{n\in\omega}$ converges to $t$.
    \item If $\cf(t)= \omega$ and there exists a countable subset $(t_n)_{n\in\omega}\subseteq \ims(t)$ with $V_{t_n}\cap A\neq\emptyset$, we pick $s_n\in V_{t_n}\cap A$ as in the previous case.
    \item If $\cf(t)=\omega$ and $F\coloneqq \{s\in\ims(t)\colon  V_{s}\cap A\neq\emptyset\}$ is finite, there exists a sequence $(t_n)_{n\in\omega}\subseteq I(T)$ such that $t_n<t$ and $\sup_{n\in\omega}t_n=t$. Therefore, since $t\in \overline{A}$, we can pick $s_n\in W_{t_n}^F\cap A$ and $(s_n)_{n\in\omega}$ is as desired.
\end{itemize}
Hence, in each case there exists a sequence $(s_n)_{n\in\omega}$ in $A$ which converges to $t$, which proves the assertion.
\end{proof}

Our next result asserts that the countably coarse wedge topology is maximal amongst all countably compact topologies which extend the coarse wedge topology on a tree.

\begin{theorem}\label{t: largestcountablycompact}
Let $T$ be a tree endowed with a topology $\tau$ such that $\ccw \subseteq \tau$. If $(T,\tau)$ is countably compact, then $\ccw=\tau$.
\end{theorem}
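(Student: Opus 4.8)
The plan is to show that any topology $\tau \supseteq \ccw$ that is strictly finer than $\ccw$ cannot be countably compact, i.e. we produce an infinite $\tau$-discrete closed subset, or equivalently an injective sequence with no $\tau$-cluster point. Suppose $\tau \supsetneq \ccw$; then there is a point $t \in T$ and a $\tau$-open neighbourhood $U$ of $t$ that contains no $\ccw$-basic neighbourhood of $t$. I would split into cases according to $\cf(t)$, mirroring the local base description given after the definition of $\ccw$.

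First, if $\cf(t) = \omega$: a $\ccw$-local base at $t$ consists of the sets $W_s^F$ with $s < t$, $s \in I(T)$, and $F \subseteq \ims(t)$ finite. Since $U$ contains none of these, for every such $(s,F)$ we have $W_s^F \setminus U \neq \emptyset$. Fix an increasing sequence $(s_n)_{n}\subseteq I(T)$ with $\sup_n s_n = t$ (it exists since $\cf(t)=\omega$), and an increasing exhaustion of $\ims(t)$ by finite sets $F_n$ if $\ims(t)$ is infinite (or $F_n = \ims(t)$ if it is finite); then choose $x_n \in W_{s_n}^{F_n}\setminus U$. The sequence $(x_n)$ is injective after passing to a subsequence, since any fixed point lies in only finitely many $W_{s_n}^{F_n}$. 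I claim $(x_n)$ has no $\ccw$-cluster point other than $t$: indeed if $y \neq t$ then either $y \not\geq t$, in which case $\{z : z \ngeq t\}$ is a $\ccw$-open neighbourhood of $y$ missing a tail of $(x_n)$ (as $x_n \geq s_n$ forces $x_n \wedge t = s_n \to t$, so eventually $x_n \geq$ any fixed $s < t$ and hence $x_n$ and $t$ are comparable-side... one must be slightly careful here, but the point is $x_n \in W_{s_n}^{F_n}$ pins its relation to $t$); or $y > t$, say $y \geq p$ for some $p \in \ims(t)$, and then $p \in F_n$ eventually so $x_n \notin V_p \ni y$ for large $n$; or $y < t$ with $\cf(y)\leq\omega$ small enough... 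The cleanest formulation: the only possible $\ccw$-cluster point of $(x_n)$ is $t$ itself, because outside $V_t$ the sets $V_{s_n}$ shrink to $\hat t$, and inside $V_t$ the excluded $F_n$ push any cluster point up to $t$. But $t \notin \overline{(x_n)}^{\,\tau}$ since $U$ is a $\tau$-neighbourhood of $t$ disjoint from $(x_n)$. Hence $(x_n)$ has no $\tau$-cluster point (a $\tau$-cluster point would be a $\ccw$-cluster point), contradicting countable compactness of $(T,\tau)$.

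Second, if $\cf(t) \neq \omega$ (i.e. $t \in I(T)$ or $\cf(t)\geq\omega_1$): now a $\ccw$-local base at $t$ is $\{W_t^F : F \subseteq \ims(t) \text{ finite}\}$. Since $U$ contains no $W_t^F$, the set $\ims(t)$ must be infinite and for every finite $F$ there is a point in $W_t^F \setminus U$; choosing an increasing sequence $(F_n)$ of finite subsets of $\ims(t)$ with $\bigcup F_n$ infinite and $x_n \in W_t^{F_n}\setminus U$, one gets (after thinning) an injective sequence. Writing $r_n$ for the element of $\ims(t)$ below $x_n$ (well-defined since $x_n \in V_t \setminus \bigcup_{s \in F_n} V_s$, so $x_n > t$ and $r_n \notin F_n$; if $x_n = t$ replace it — but $x_n \notin U \ni t$, so $x_n \neq t$), we may further thin so the $r_n$ are distinct. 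Then $(x_n)$ has no $\ccw$-cluster point: any cluster point $y$ with $y \ngeq t$ is separated by $\{z : z\ngeq t\}$ from a tail; if $y > t$ with $y \geq p \in \ims(t)$ then $V_p$ contains at most one $x_n$ (namely the one with $r_n = p$), so $y$ is not a cluster point; if $y = t$, then $t \notin U$... wait $t \in U$; rather, $W_t^{F_n}$ for suitable $n$ (with $r_1,\dots,r_k$ among the excluded successors) shows $t$ is not a cluster point of the thinned sequence. Again $t$ is the only candidate and is excluded by $\tau$; and all clusters are already ruled out, so $(x_n)$ has no $\ccw$-cluster point at all, hence no $\tau$-cluster point, again contradicting countable compactness.

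The main obstacle I anticipate is the bookkeeping in the second case and in the ``$y \ngeq t$'' subcase of the first: one needs to verify carefully, using that $x_n \in V_{s_n}$ (resp. $x_n \in V_t$) with the $\wedge$-operation available by chain completeness, that a putative cluster point $y$ not above $t$ really is separated from a tail of $(x_n)$ — the relevant $\ccw$-open set is $T \setminus V_q$ where $q \in I(T)$ is chosen below the relevant part of $\hat y$ but not below $t$, which exists because $\hat y \cap \hat t$ is a proper initial segment of $\hat t$ and one can find such a $q$ by chain completeness and a cofinality argument exactly as in the proof of Fact \ref{f: Vt disjoint from closure}. Packaging both cases through the single observation ``the constructed sequence has $t$ as its unique potential $\ccw$-cluster point, and $\tau$ destroys that point'' should keep the argument short; the $\ccw$-countable-compactness of $T$ (Proposition \ref{p: basicsigmaprop}) guarantees a $\ccw$-cluster point always exists, so pinning it to be $t$ and then excluding it is exactly what yields the contradiction.
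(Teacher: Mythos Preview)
Your approach---constructing an explicit sequence with no $\tau$-cluster point---is genuinely different from the paper's, and it has a real gap in the first case. When $\cf(t)=\omega$ you take ``an increasing exhaustion of $\ims(t)$ by finite sets $F_n$'', but nothing in the hypotheses prevents $\ims(t)$ from being uncountable, and then no such exhaustion exists. This breaks two of your claims at once. First, ``any fixed point lies in only finitely many $W_{s_n}^{F_n}$'' is false: if $r\in\ims(t)\setminus\bigcup_n F_n$ then every point of $V_r$ lies in \emph{all} the sets $W_{s_n}^{F_n}$. Second, even after thinning to an injective sequence, nothing prevents infinitely many $x_n$ from lying in a single such $V_r$, whence the $\ccw$-cluster point sits in $V_r$ rather than at $t$, and your contradiction evaporates. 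A repair is possible (split into whether $(V_{s_n}\setminus V_t)\setminus U$ is nonempty for all $n$---then choose $x_n$ there, so $x_n\not\geq t$ and the only possible cluster point is $t$---or not, in which case $W_s^F\setminus U\subseteq V_t\setminus\{t\}$ for some $s$ and every finite $F$, so infinitely many $r\in\ims(t)$ satisfy $V_r\setminus U\neq\emptyset$, and you pick $x_n\in V_{r_n}\setminus U$ with the $r_n$ distinct, reducing to your second case). Your second case is essentially fine, though the exposition is muddled: $t$ \emph{is} a $\ccw$-cluster point of your sequence, since every $W_t^F$ meets all but finitely many $V_{r_n}$, so the sentence ``$(x_n)$ has no $\ccw$-cluster point at all'' is wrong; the correct statement---which you also write---is that $t$ is the \emph{unique} $\ccw$-cluster point and is excluded by the $\tau$-neighbourhood $U$.

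The paper's argument sidesteps all of this bookkeeping. Given $U\in\tau$ it considers the set $\min(U)$ of minimal elements of $U$ and shows every $t\in\min(U)$ has $\cf(t)\neq\omega$: otherwise an increasing sequence $(t_n)$ below $t$ with $\sup t_n=t$ lies outside $U$ and $\ccw$-converges to $t$, so it has no $\tau$-cluster point, contradicting countable compactness. Hence $V_t$ is $\ccw$-clopen for each $t\in\min(U)$; then $V_t\setminus U$ is $\tau$-closed, hence $\tau$-countably compact, hence $\ccw$-countably compact, hence $\ccw$-closed by the Fr\'echet--Urysohn property of Proposition~\ref{p: basicsigmaprop}. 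Thus $V_t\cap U\in\ccw$, and $U=\bigcup_{t\in\min(U)}(V_t\cap U)\in\ccw$.
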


\begin{proof} Pick $U\in \tau$ and denote by $\min(U)$ the set of minimal elements in $U$. We first show that, for every $t\in \min(U)$, we have $\cf(t)\neq \omega$. Indeed, suppose on the contrary that $\cf(t)= \omega$. Then there exists an increasing sequence $(t_n)_{n\in \omega}$ such that $t_n<t$, $t_n\in I(T)$ and $\sup_{n\in\omega}t_n=t$. Since $t\in \min(U)$ and $t_n<t$, we have that $t_n\notin U$ for every $n\in\omega$. Hence $(t_n)_{n\in \omega}$ converges to $t$ in $\ccw$, but $t$ is not a cluster point for it in $\tau$. Since $\ccw\subseteq \tau$, we get that $(t_n)_{n\in\omega}$ has no cluster point in $\tau$, which contradicts the countable compactness. Therefore, if $t\in\min(U)$, then $\cf(t)\neq \omega$. 

In particular, if $t\in\min(U)$, then $V_t$ and $T\setminus V_t$ belong to $\ccw$. Hence, $V_t$ is closed in $\ccw$, so then $V_t\setminus U$ is closed in $\tau$. Thus, $V_t\setminus U$ is countably compact in $\tau$, whence in $\ccw$ (using again that $\ccw\subseteq\tau$). As a consequence, $V_t\setminus U$ is $\ccw$-countably closed, hence $\ccw$-closed, since $(T,\ccw)$ is Fr\'{e}chet--Urysohn by Proposition \ref{p: basicsigmaprop}. Thus $V_t\cap U$ is $\ccw$-open. Finally, observing that
\[
U=\bigcup_{t\in\min(U)}V_t\cap U,
\]
we conclude that $U\in\ccw$.\end{proof}

In the second part of this section we show that, when restricting our attention to $r_1$-trees, we can give an explicit connection between $\ccw$ and $\cw$. More precisely, the restriction of the coarse wedge topology to a certain subset $\tilde{T}$ of $T$ is the countably coarse wedge topology on $\tilde{T}$ and vice versa the \v{C}ech--Stone compactification of a tree $T$ with $\ccw$ is homeomorphic to a tree $\hat{T}$ endowed with $\cw$. Since we need to deal with two trees simultaneously, in the remainder of the section we denote by $V_t(T)$ the upwards cone of $t$ in the tree $T$.

\begin{proposition}\label{p: restrictionsigmatopology}
Let $T$ be an $r_1$-tree endowed with the coarse wedge topology. Then the subspace topology on $\tilde{T}=T\setminus \bigcup_{\cf(\alpha) \geq\omega_1}\Lev_{\alpha}(T)$ coincides with the $\ccw$-topology generated by the order $\leq_T$ restricted to $\tilde{T}$. 
\end{proposition}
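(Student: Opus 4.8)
The plan is to show the two topologies on $\tilde T$ agree by comparing their respective subbases and local bases. First I would observe that $\tilde T$ is an initial part of $T$: if $t\in\tilde T$ and $s\leq_T t$ then $\htte(s,T)\leq\htte(t,T)$, and the cofinality of the height of $s$ is still $\neq\omega$ in the sense that $s$ is not on a removed level. More precisely, since $T$ is an $r_1$-tree, every $t$ with $\cf(t,T)\geq\omega_1$ has $|\ims(t)|\leq 1$, so a level $\Lev_\alpha(T)$ with $\cf(\alpha)\geq\omega_1$ is either empty or a singleton-chain continuation; removing these levels leaves $\tilde T$ downward closed. Consequently, for $t\in\tilde T$ one has $V_t(T)\cap\tilde T=V_t(\tilde T)$, where the latter is the upward cone computed using $\leq_T$ restricted to $\tilde T$. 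This already shows that for $t\in I(T)\cap\tilde T$ the generating sets $V_t(T)$ of $\cw$ restrict to sets of the form $V_t(\tilde T)$.

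The next step is to reconcile the two indexing conventions. The coarse wedge topology on $T$ is generated by $V_t(T)$ and complements for $t\in I(T)$, i.e. $\cf(t,T)<\omega$; the $\ccw$-topology on $\tilde T$ is generated by $V_t(\tilde T)$ and complements for those $t\in\tilde T$ with $\cf(t,\tilde T)\neq\omega$. The crucial point is that for $t\in\tilde T$ the quantities $\htte(t,T)$ and $\htte(t,\tilde T)$ need not coincide — removing high-cofinality levels below $t$ can shorten the height — but I claim $\cf(t,\tilde T)\neq\omega$ if and only if either $\cf(t,T)<\omega$ or $\cf(t,T)\geq\omega_1$; equivalently, a point is "$\omega$-cofinal in $\tilde T$" exactly when it was so in $T$. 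Here I would use the $r_1$-property decisively: a point $t$ with $\cf(t,T)=\omega$ has a cofinal increasing $\omega$-sequence $(t_n)$ lying in $I(T)\subseteq\tilde T$, and this sequence is still cofinal and increasing in $\tilde T$, so $\cf(t,\tilde T)=\omega$; conversely, if $\cf(t,T)\geq\omega_1$ then the predecessors of $t$ in $T$ form an $\omega_1$-chain, from which the $r_1$-condition forces (essentially) a club of them to lie on singleton high-cofinality levels, and after deleting the removed levels, $\{s<_T t: s\in\tilde T\}$ still has uncountable cofinality — because one cannot cofinally approximate an $\omega_1$-limit by a countable set, and the surviving predecessors remain cofinal in the original ordinal. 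So the two subbases coincide set-theoretically once restricted to $\tilde T$, modulo this re-indexing bookkeeping.

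With the subbase identification in hand, I would then just check local bases at each point $t\in\tilde T$, using the explicit descriptions recorded after the definition of $\ccw$: when $\cf(t,\tilde T)\neq\omega$, the local base is $\{W_t^F(\tilde T):F\subseteq\ims_{\tilde T}(t)\text{ finite}\}$, and when $\cf(t,\tilde T)=\omega$ it is $\{W_s^F(\tilde T):s<_{\tilde T}t,\ s\in I(\tilde T),\ F\subseteq\ims_{\tilde T}(t)\text{ finite}\}$. One must verify $\ims_{\tilde T}(t)=\ims_T(t)$ for $t\in\tilde T$ with immediate successors in $\tilde T$ — true because successor levels are never removed — and that $W_s^F(\tilde T)=W_s^F(T)\cap\tilde T$, which is immediate from $V_s(T)\cap\tilde T=V_s(\tilde T)$. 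Comparing with the coarse wedge local base on $T$ (which at $t\in\tilde T$ is generated by $V_s(T)$ minus finitely many cones $V_r(T)$ with $s\in I(T)$, $s\leq_T t$, $r\in\ims_T(t)$) and intersecting with $\tilde T$ then yields exactly the $\ccw$-local base at $t$.

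**Main obstacle.** The genuinely delicate part is the second step: the cofinality of a point can change when passing from $T$ to $\tilde T$, and I must show that it never changes from $\geq\omega_1$ (or a successor) to exactly $\omega$, nor vice versa. This is where the full strength of the $r_1$-hypothesis is needed — for a general $r$-tree, or without the singleton-successor condition at uncountable-cofinality points, deleting those levels could collapse an $\omega_1$-chain onto an $\omega$-cofinal point, and the statement would fail. I expect this cofinality-preservation lemma to be the technical heart of the argument, while the subbase and local-base comparisons around it are routine once it is established.
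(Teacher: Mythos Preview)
Your overall approach---compare the two subbases and show they coincide as families of subsets of $\tilde T$---is exactly the paper's argument, and the key identity $I(T)=\{t\in\tilde T:\cf(t,\tilde T)\neq\omega\}$ together with $V_t(T)\cap\tilde T=V_t(\tilde T)$ is precisely what the paper isolates. However, several of the supporting claims in your write-up are wrong and need to be fixed.

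First, $\tilde T$ is \emph{not} an initial part of $T$. Take any $t$ with $\htte(t,T)=\omega_1+1$: then $t\in\tilde T$, but its immediate predecessor sits on level $\omega_1$ and has been removed. Your sentence invoking the $r_1$-condition to conclude downward closure is a non sequitur (the $r_1$-condition bounds $|\ims(t)|$, it says nothing about the size of a level). Fortunately this error is harmless: the identity $V_t(T)\cap\tilde T=V_t(\tilde T)$ holds tautologically for every $t\in\tilde T$, without any downward-closure assumption.

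Second, and more seriously, your ``conversely'' paragraph analyses a vacuous case. If $t\in\tilde T$ then by definition $\cf(t,T)\in\{0,\omega\}$; the case $\cf(t,T)\geq\omega_1$ never occurs. What you actually must show is: if $t\in I(T)$ (so $\htte(t,T)$ is $0$ or a successor) then $\cf(t,\tilde T)\neq\omega$. The nontrivial subcase is when $\htte(t,T)=\beta+1$ with $\cf(\beta)\geq\omega_1$: then the immediate predecessor of $t$ is deleted, and the $\tilde T$-predecessors of $t$ are exactly the $T$-predecessors lying on levels $\gamma<\beta$ with $\cf(\gamma)<\omega_1$. These form a cofinal subset of an ordinal of uncountable cofinality, hence themselves have uncountable cofinality, so $\cf(t,\tilde T)\geq\omega_1\neq\omega$. (Your argument about ``surviving predecessors remain cofinal'' is the right idea, just applied to the wrong point.)

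Third, you misplace the role of the $r_1$-hypothesis. The cofinality-preservation step above does not use it at all; what $r_1$ is genuinely needed for---and what the paper records first---is that $(\tilde T,\leq_T)$ is chain complete, so that $\ccw$ on $\tilde T$ is defined in the framework of the paper. Without $r_1$, a deleted point $t$ with $|\ims(t)|\geq2$ leaves the chain $\{s<t\}$ with no supremum in $\tilde T$.
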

\begin{proof} We first observe that, since $T$ is an $r_1$-tree, $\tilde{T}$ is chain complete (and rooted) when it is endowed with $\leq_T$. It is easy to see that $I(T)=\{t\in\tilde{T}\colon \cf(t,\tilde{T})\neq\omega\}$. Moreover, for every $t$ in such set we have $V_t(T)\cap \tilde{T}= V_t(\tilde{T})$, hence
\begin{equation*}
    \{\tilde{T}\cap V_t(T)\colon t\in I(T)\}= \{V_t(\tilde{T}) \colon t\in \tilde{T}, \cf(t,\tilde{T}) \neq\omega\}.
\end{equation*}
Therefore, the restriction of $\cw$ to $\tilde{T}$ coincides with $\ccw$ and we are done.
\end{proof}

Let $T$ be a tree such that $\htte(T)\geq\omega_1 +1$. We define a new tree $\hat{T}$ in which, for each $t\in \Lev_{\alpha}(T)$ we add a unique point $s(t)$ between $t$ and its predecessors $\{r\in T\colon r<t\}$, whenever $\cf(\alpha)\geq\omega_1$. We identify $T$ with a subset of $\hat{T}$ in the natural way. Notice that we have $T=\tilde{\hat{T}}$, while $T=\hat{\tilde{T}}$ holds true if and only if $T$ is an $r_1$-tree.
The proof of the following result was suggested to us by the anonymous referee, our previous argument was more complicated.

\begin{theorem}\label{t: compactification}
Let $T$ be a tree. Then $\beta(T,\ccw)$ is homeomorphic to $(\hat{T},\cw)$. 
\end{theorem}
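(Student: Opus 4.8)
The plan is to exhibit $(\hat T,\cw)$ as a compactification of $(T,\ccw)$ and then verify the universal property characterizing $\beta(T,\ccw)$. First I would observe that $\hat T$ is a chain complete, rooted tree (it is obtained from $T$ by inserting, below each point of a level of uncountable cofinality, a single new point, and this operation preserves chain completeness and rootedness), and that $\htte(\hat T)=\htte(T)$ up to the obvious adjustments; in particular $(\hat T,\cw)$ is compact Hausdorff by \cite[Corollary 3.5]{N1}. Next I would identify $T$ with the subset $T\subseteq\hat T$ as indicated before the statement, i.e.\ $T=\tilde{\hat T}$. By Proposition \ref{p: restrictionsigmatopology} applied to the $r_1$-tree $\hat T$ (note $\hat T$ is an $r_1$-tree by construction: the only points of $\hat T$ with uncountable cofinality are the newly added points $s(t)$, and each has a unique immediate successor), the subspace topology that $\cw$ on $\hat T$ induces on $\tilde{\hat T}=T$ is exactly the $\ccw$-topology of $T$. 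So $(T,\ccw)$ embeds homeomorphically into the compact space $(\hat T,\cw)$.

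The second step is to show this embedding is dense, i.e.\ $\overline{T}^{\,\cw}=\hat T$. The only points of $\hat T$ not in $T$ are the inserted points $s(t)$, where $t\in\Lev_\alpha(T)$ with $\cf(\alpha)\ge\omega_1$; such an $s(t)$ has cofinality $\ge\omega_1$ in $\hat T$, so a basic $\cw$-neighbourhood of $s(t)$ has the form $W^F_p=V_p(\hat T)\setminus\bigcup_{r\in F}V_r(\hat T)$ with $p<s(t)$, $p\in I(\hat T)$, and $F\subseteq\ims_{\hat T}(s(t))=\{t\}$. Since $p<s(t)$ forces $p<t$ and $p\in T$, the point $t$ itself lies in every such neighbourhood with $F=\emptyset$, and when $F=\{t\}$ one instead uses any point of $V_p(\hat T)$ strictly between $p$ and $t$ that is not above $t$; chain completeness and the fact that $p\in I(\hat T)$ while $\cf(t)\ge\omega_1$ guarantee such a point exists in $T$. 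Hence every neighbourhood of $s(t)$ meets $T$, so $T$ is dense in $(\hat T,\cw)$.

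The third and final step is the universal property: every continuous $f\colon(T,\ccw)\to K$ into a compact Hausdorff $K$ extends continuously to $(\hat T,\cw)$. Since $(T,\ccw)$ is countably compact (Proposition \ref{p: basicsigmaprop}) and $(\hat T,\cw)$ is compact with $T$ dense, it suffices to show that for each inserted point $s(t)$ the net of points of $T$ converging to $s(t)$ in $\cw$ has a unique limit under $f$; equivalently, that $\bigcap\{\overline{f(W\cap T)}\colon W\text{ a }\cw\text{-nbhd of }s(t)\}$ is a singleton. The key fact is that, in $\hat T$, the point $s(t)$ is approached from $T$ only ``from below'': using Fact \ref{f: Vt disjoint from closure} (applied in $\hat T$, noting $\cf(s(t))\ge\omega_1$) the set $T\cap(\hat T\setminus V_{s(t)}(\hat T))$ is countably closed, so any sequence in $T$ clustering at $s(t)$ is eventually in $V_{s(t)}(\hat T)\cap T=\{r\in T: r\ge t\}$; but then, since $W^F_p$ with $F=\{t\}$ excludes $V_t(\hat T)$, a sequence in $T$ converging to $s(t)$ must be cofinal in $\hat t$, i.e.\ it is a sequence $r_n<t$ with $\sup r_n=t$ in the sense of $\hat T$. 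By Fréchet--Urysohnness (Proposition \ref{p: basicsigmaprop}) such sequences are exactly the ones computing closures at $t$ in $(T,\ccw)$, and since $\cf(t)\ge\omega_1$ every two such cofinal sequences share a common refinement; continuity of $f$ on $(T,\ccw)$ together with countable compactness of the relevant subnet then forces the limit to be independent of the sequence, giving a well-defined value $\hat f(s(t))$. Checking that the resulting map $\hat f$ is continuous on $(\hat T,\cw)$ is then routine: on the open set $\bigcup_{\cf(\alpha)<\omega_1}\Lev_\alpha(\hat T)\cap(\text{neighbourhoods of the }s(t))$ one uses the local base description, and at each $s(t)$ continuity is exactly the statement we just proved.

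I expect the main obstacle to be the third step — verifying the universal property cleanly — specifically the argument that $f$ assigns a single well-defined value to each inserted point $s(t)$. The delicate point is combining the ``approached only from below'' structure of $s(t)$ in $\hat T$ with countable compactness of $(T,\ccw)$ to upgrade ``every sequence has a common limit value'' to ``the full net converges''; one must be careful that $(T,\ccw)$ is only countably compact, not compact, so the extension argument cannot simply invoke $\beta(T,\ccw)$ abstractly but must use the concrete local structure at the points of uncountable cofinality. Everything else — chain completeness of $\hat T$, the identification of the subspace topology via Proposition \ref{p: restrictionsigmatopology}, and density — is essentially bookkeeping with the definitions.
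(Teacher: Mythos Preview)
Your first two steps are fine, but the third step --- extending $f$ by sequences --- has a genuine gap that cannot be repaired within the sequential framework. The issue is that \emph{no} sequence in $T$ converges to $s(t)$ in $(\hat T,\cw)$. Indeed, a basic $\cw$-neighbourhood of $s(t)$ is $V_p(\hat T)\setminus V_t(\hat T)$ (with $p\in I(\hat T)$, $p<s(t)$), so any sequence $(x_n)\subseteq T$ converging to $s(t)$ is eventually in $\hat T\setminus V_t(\hat T)\subseteq \hat T\setminus V_{s(t)}(\hat T)$; but Fact~\ref{f: Vt disjoint from closure} (the very fact you invoke) then forces the limit to stay in $\hat T\setminus V_{s(t)}(\hat T)$, which excludes $s(t)$. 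Your text actually contains this contradiction: you first conclude the sequence is eventually in $\{r\in T:r\ge t\}$, and two lines later that it must satisfy $r_n<t$ with $\sup r_n=t$ --- and the latter is anyway impossible since $\cf(t)\ge\omega_1$. So there is nothing to take a limit of, and the proposed definition of $\hat f(s(t))$ is vacuous.

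The paper sidesteps this by using the characterisation of $\beta X$ via ``disjoint closed subsets of $X$ have disjoint closures in the compactification''. Given disjoint closed $A,B\subseteq (T,\ccw)$ whose closures meet at some $s(t)$, Lemma~\ref{l: closedunbdd} produces closed unbounded subsets of $\{r:r<s(t)\}$ consisting of accumulation points of $A$ and of $B$; intersecting the two clubs and passing to a point of countable cofinality yields a common accumulation point $s\in T$, whence $s\in A\cap B$ --- a contradiction. The key difference from your plan is that the approach to $s(t)$ is handled by a net (really, a club below $t$), not by sequences, and the passage to a sequential object happens only \emph{after} intersecting the clubs. If you want to salvage your universal-property route, you would have to replace sequences by the net indexed by $\{p\in I(\hat T):p<s(t)\}$ and argue directly that $\bigcap_{p<s(t)}\overline{f((V_p\setminus V_t)\cap T)}$ is a singleton; this amounts to the same club argument.
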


\begin{proof} 
At first we observe that $(T,\ccw)$ is a dense subset of the compact space $(\hat{T},\cw)$. Hence, in order to show that $\beta(T,\ccw)$ is homeomorphic to $(\hat{T},\cw)$ it is enough to show that every pair of disjoint closed subsets of $(T,\ccw)$ has disjoint closures in $(\hat{T},\cw)$. Towards a contradiction, assume that there are disjoint closed sets $A,B$ in $(T,\ccw)$, such that $\overline{A}\cap \overline{B}\neq\emptyset$, where the closure is taken in $(\hat{T},\cw)$. Then there is $t\in T$ such that $\cf(t,T)\geq\omega_1$ and $s(t)\in \overline{A}\cap \overline{B}$, where $s(t)\in \hat{T}$ is the unique element which separates $t$ from $\{r\in T\colon r<t\}$. Using the fact that $\{V_p\setminus V_t\colon p<t\}$ is a local base at $s(t)$, we conclude that $s(t)$ is an accumulation point for $A\setminus V_t$, hence also for $A\setminus V_{s(t)}$ (note that $V_{s(t)}=V_t\cup \{s(t)\}$). Similarly, $s(t)$ is an accumulation point for $B\setminus V_{s(t)}$. According to Lemma \ref{l: closedunbdd}, there exist two closed and unbounded subsets of $\{r\in \hat{T}\colon r<s(t)\}$ that consist of accumulation points of $A$ and $B$ respectively. Hence there also is $s\in\hat{T}$, $s<s(t)$ with $\cf(s,\hat{T})=\omega$ and such that $s$ is an accumulation point both of $A$ and of $B$. (This is a standard argument: pick $s_0<s(t)$ that is an accumulation point for $A$; next pick $s_1$ with $s_0<s_1<s(t)$ and that is an accumulation point for $B$; then take $s_1<s_2<s(t)$ and such that $s_2$ is an accumulation point for $A$. Then $s=\sup s_j$ is as desired.) The condition $\cf(s,\hat{T})=\omega$ implies that actually $s\in T$. Finally, the assumption that $A$ and $B$ are closed in $(T,\ccw)$ gives the contradiction that $s\in A\cap B$.
\end{proof}

Let us conclude the section with one comment on the assumption that $T$ is an $r_1$-tree in Proposition \ref{p: restrictionsigmatopology}. If $T$ is not an $r_1$-tree, then $\tilde{T}$ is not chain complete. On the other hand, according to \cite[Lemma 5.20]{K4}, if $T$ is an $r$-tree, then there is a finer partial order $\preceq$ on $T$ such that the (countably) coarse wedge topologies of $(T,\leq)$ and $(T,\preceq)$ are the same and $(T,\preceq)$ is an $r_1$-tree. Therefore, as it concerns topological properties, considering $r_1$-trees or $r$-trees is essentially equivalent. The situation is different if $T$ is not an $r$-tree, since then $(\tilde{T},\ccw)$ fails to be countably compact. Indeed, if $t\in T$ is such that $\cf(t)\geq\omega_1$ and $\ims(t)$ is infinite, then $\ims(t)$ is a closed, discrete, and infinite subset of $(\tilde{T},\ccw)$. Therefore, $(\tilde{T},\ccw)$ is not countably compact.

\section{Weakly Corson countably compact trees}\label{s: wct}

In the present section we study the relations between the classes of Valdivia compact trees and weakly Corson compact trees. It turns out that a tree $(T,\cw)$ is weakly Corson if and only if $(T,\ccw)$ is Corson countably compact, if and only if $(\hat{T},\ccw)$ is Valdivia. Moreover, if a tree $(T,\cw)$ is weakly Corson, then this is witnessed by the Corson countably compact space $(T,\ccw)$ (see Remark \ref{r: witness weak Corson}). Let us recall that a \textit{quotient map} is an onto map $f\colon X\to Y$ such that $A$ is open in $Y$ if and only if $f^{-1}(A)$ is open in $X$.

\begin{lemma}\label{l: existencequotient}
Let $X$ be a Fr\'{e}chet--Urysohn countably compact space and $T$ be a tree. If $\p\colon X\to (T,\cw)$ is a continuous onto map, then $\p\colon X\to (T,\ccw)$ is a quotient map.
\end{lemma}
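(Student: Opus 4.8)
The plan is to show that a subset $A \subseteq T$ is $\ccw$-open if and only if $\p^{-1}(A)$ is open in $X$. Since $\p\colon X \to (T,\cw)$ is continuous and $\ccw \supseteq \cw$, one implication is not immediate; rather, I will use the two implications symmetrically. First, observe that $\p\colon X \to (T,\ccw)$ is continuous: indeed, $X$ is countably compact, hence so is its continuous image $\p(X) = T$ under any coarser topology, but more directly I claim $\p$ stays continuous when the target topology is refined from $\cw$ to $\ccw$. To see this it suffices to check that $\p^{-1}(V_t)$ is open in $X$ for every $t$ with $\cf(t) = \omega$ (these are the new subbasic sets, together with their complements). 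Fix such a $t$ and pick an increasing sequence $(t_n)_{n \in \omega} \subseteq I(T)$ with $\sup_n t_n = t$. Then $V_t = \bigcap_{n} V_{t_n} \cap (\{t\} \cup \bigcup_{r \in \ims(t)} V_r)$? — this is not quite a clean identity, so instead I will argue that $T \setminus V_t = \bigcup_n (T \setminus V_{t_n})$ fails, and handle $V_t$ directly via sequences: if $x \in \p^{-1}(V_t)$ but $x \notin \operatorname{int}\p^{-1}(V_t)$, then $x \in \overline{\p^{-1}(T \setminus V_t)}$, and since $X$ is Fréchet--Urysohn there is a sequence $x_k \to x$ with $\p(x_k) \notin V_t$; by continuity into $(T,\cw)$, $\p(x_k) \to \p(x) =: s \in V_t$ in $\cw$. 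If $s > t$ or $s$ is incomparable to some element, eventually $\p(x_k) \in V_{s \wedge \cdot}$ contradicts $\p(x_k) \notin V_t$; if $s = t$, then since $\cf(t) = \omega$ and $\p(x_k) \in T \setminus V_t$ converges to $t$ in $\cw$, for each $n$ eventually $\p(x_k) \in V_{t_n}$, and also $\p(x_k) \wedge t < t$, which is the standard setup forcing a $\cw$-neighborhood $W_{t_n}^F$ that excludes all but finitely many $\p(x_k)$ — contradiction. (This is exactly the kind of computation done in the proof of Proposition \ref{p: basicsigmaprop} and Lemma \ref{l: closedunbdd}, so I would invoke or mimic it rather than redo it.)

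Granting continuity of $\p\colon X \to (T,\ccw)$, the inclusion "$A$ $\ccw$-open $\Rightarrow$ $\p^{-1}(A)$ open" holds. For the converse, suppose $\p^{-1}(A)$ is open in $X$; I must show $A \in \ccw$. By Theorem \ref{t: largestcountablycompact}, it suffices to exhibit a countably compact topology $\tau$ on $T$ with $\ccw \subseteq \tau$ and $A \in \tau$, because then $\tau = \ccw$ and hence $A \in \ccw$. The natural candidate is the quotient topology $\tau_q$ on $T$ induced by $\p\colon X \to T$: by definition $B \in \tau_q$ iff $\p^{-1}(B)$ is open, so $A \in \tau_q$ by hypothesis, and $\ccw \subseteq \tau_q$ since $\p\colon X \to (T,\ccw)$ is continuous. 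Finally, $(T,\tau_q)$ is the continuous image of the countably compact space $X$ under the quotient map, hence countably compact. Thus Theorem \ref{t: largestcountablycompact} applies and gives $\tau_q = \ccw$, so $A$ is $\ccw$-open, and therefore $\p\colon X \to (T,\ccw)$ is a quotient map.

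The main obstacle I anticipate is the verification that $\p\colon X \to (T,\ccw)$ is continuous, i.e.\ that $\p^{-1}(V_t)$ is open for $t$ with $\cf(t) = \omega$: this uses in an essential way that $X$ is Fréchet--Urysohn and countably compact (so that failure of openness produces a convergent sequence in the complement, and then the tree structure at a point of countable cofinality yields a $\cw$-neighborhood contradiction). Everything else is soft: the equivalence "$\p$ continuous into $(T,\ccw)$" $\Longleftrightarrow$ "$\ccw \subseteq \tau_q$", the fact that quotients of countably compact spaces are countably compact, and the appeal to the maximality Theorem \ref{t: largestcountablycompact}. I would therefore structure the writeup as: (1) a short lemma or inline argument that $\p$ is $\ccw$-continuous; (2) introduce $\tau_q$, note $A \in \tau_q$ and $\ccw \subseteq \tau_q$ and $(T,\tau_q)$ countably compact; (3) invoke Theorem \ref{t: largestcountablycompact} to conclude $\tau_q = \ccw$ and hence that $\p$ is a quotient map onto $(T,\ccw)$.
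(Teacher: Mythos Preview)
Your overall architecture is exactly the paper's: introduce the quotient topology $\tau_q$ induced by $\p$, check that $\ccw\subseteq\tau_q$, note that $(T,\tau_q)$ is countably compact, and apply Theorem~\ref{t: largestcountablycompact} to conclude $\tau_q=\ccw$. That part is fine and is precisely what the paper does.

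The gap is in your step (1), where you misidentify which subbasic sets are new. The subbase for $\ccw$ consists of $V_t$ and $T\setminus V_t$ for $t$ with $\cf(t)\neq\omega$; the subbase for $\cw$ uses $t\in I(T)$, i.e.\ $\cf(t)<\omega$. So the \emph{new} subbasic sets when passing from $\cw$ to $\ccw$ are those with $\cf(t)\geq\omega_1$, not those with $\cf(t)=\omega$. In fact $V_t$ is \emph{not} $\ccw$-open when $\cf(t)=\omega$ (the local base at such $t$ consists of sets $W_s^F$ with $s<t$, all of which leak below $t$), so the claim you are trying to establish---that $\p^{-1}(V_t)$ is open for $\cf(t)=\omega$---is false in general: take $X=T=[0,\omega]$ and $\p=\mathrm{id}$, then $V_\omega=\{\omega\}$ is not open. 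Your entire case analysis for ``$s=t$ with $\cf(t)=\omega$'' is thus aimed at the wrong target.

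Once you correct the cofinality condition to $\cf(t)\geq\omega_1$, the verification becomes short and matches the paper: $\p^{-1}(V_t)$ is closed since $V_t$ is $\cw$-closed, and if it were not open, Fr\'echet--Urysohn produces a sequence $(x_n)\subseteq X\setminus\p^{-1}(V_t)$ converging to some $x\in\p^{-1}(V_t)$; then $(\p(x_n))\subseteq T\setminus V_t$ $\cw$-converges to $\p(x)\in V_t$, contradicting Fact~\ref{f: Vt disjoint from closure}. No delicate computation of the sort you sketch is needed.
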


\begin{proof} We first prove that $\p^{-1}(V_t)$ is a clopen subset of $X$ for every $t\in T$ with $\cf(t)\neq \omega$. Indeed, if $\cf(t)<\omega$, this is clear since $\p$ is continuous and $V_t$ is $\cw$-clopen. Fix $t\in T$ with $\cf(t)\geq\omega_1$. Then $V_t$ is $\cw$-closed and by continuity of $\p$ it is enough to show that $\p^{-1}(V_t)$ is open in $X$. Towards a contradiction, assume that this is not the case; hence there exists $x\in \p^{-1}(V_t)\cap \overline{X\setminus \p^{-1}(V_t)}$. The assumption that $X$ is Fr\'{e}chet--Urysohn yields the existence of a sequence $(x_n)_{n\in\omega}\subseteq X\setminus \p^{-1}(V_t) $ which converges to $x$. So $(\p(x_n))_{n\in\omega}\subseteq T\setminus V_t$ converges to $\p(x)\in V_t$ in the coarse wedge topology, contrary to Fact \ref{f: Vt disjoint from closure}. This proves that $\p^{-1}(V_t)$ is clopen in $X$ for every $t\in T$ with $\cf(t)\neq \omega$.

Define a topology $\tau$ on $T$ by declaring that $A\subseteq T$ is $\tau$-open if and only if $\p^{-1}(A)$ is open in $X$. Hence, $\p\colon X\to (T,\tau)$ is a quotient map and in particular $(T,\tau)$ is countably compact. Moreover, the first part of the proof gives that $V_t$ and $T\setminus V_t$ belong to $\tau$ for every $t\in T$ with $\cf(t)\neq\omega$; thus $\ccw\subseteq \tau$. By Theorem \ref{t: largestcountablycompact}, $\ccw=\tau$ and we are done.
\end{proof}

As a consequence, we get the following characterisation of weakly Corson compact trees.

\begin{theorem}\label{t: tfaeValdCors}
Let $T$ be a tree. The following are equivalent.
\begin{romanenumerate}
    \item $(T,\cw)$ is a weakly Corson compact space.
    \item $(T,\ccw)$ is a Corson countably compact space.
    \item $(\hat{T},\cw)$ is a Valdivia compact space.
\end{romanenumerate}
\end{theorem}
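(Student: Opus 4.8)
The plan is to prove the cycle of implications (i) $\Rightarrow$ (ii) $\Rightarrow$ (iii) $\Rightarrow$ (i), exploiting the three topologies we have at our disposal: $\cw$ on $T$ and on $\hat{T}$, and $\ccw$ on $T$.

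\textbf{(i) $\Rightarrow$ (ii).} Suppose $(T,\cw)$ is weakly Corson, so there is a Corson countably compact space $X\subseteq \Sigma(\Gamma)$ and a continuous onto map $\p\colon X\to (T,\cw)$. By Proposition \ref{p: basicsigmaprop} the space $X$, being a countably compact subset of $\Sigma(\Gamma)$, is Fr\'echet--Urysohn (Corson countably compact spaces are Fr\'echet--Urysohn); in any case Lemma \ref{l: existencequotient} applies and gives that $\p\colon X\to (T,\ccw)$ is a quotient map. Now I would argue that a quotient of a Corson countably compact space onto a Fr\'echet--Urysohn countably compact space is again Corson countably compact: indeed $\beta X$ is Valdivia with $X$ as a $\Sigma$-subset, $\beta\p\colon \beta X\to\beta(T,\ccw)=(\hat T,\cw)$ (using Theorem \ref{t: compactification}) is a continuous surjection of compacta, so $(\hat T,\cw)$ is weakly Valdivia; but to get that $(T,\ccw)$ itself embeds in a $\Sigma(\Gamma)$ I would rather work directly, pulling back the coordinate functions. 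Concretely, $(T,\ccw)$ is countably compact and Fr\'echet--Urysohn by Proposition \ref{p: basicsigmaprop}, and a countably compact Fr\'echet--Urysohn space that is the quotient (equivalently, continuous) image of a Corson countably compact is Corson countably compact — this is the analogue for countably compact spaces of the fact that Corson compacta are stable under continuous images, and I expect to either cite it or reprove it via the Rosenthal-type criterion.

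\textbf{(ii) $\Rightarrow$ (iii).} This is the cleanest implication. If $(T,\ccw)$ is Corson countably compact, then as recalled in the Introduction the \v{C}ech--Stone compactification $\beta(T,\ccw)$ is Valdivia with $(T,\ccw)$ as a $\Sigma$-subset (\cite[p.~7]{K2}). By Theorem \ref{t: compactification}, $\beta(T,\ccw)$ is homeomorphic to $(\hat T,\cw)$, hence $(\hat T,\cw)$ is Valdivia.

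\textbf{(iii) $\Rightarrow$ (i).} Suppose $(\hat T,\cw)$ is Valdivia, so it is weakly Valdivia and in particular a continuous image of a Valdivia compactum; equivalently $\beta(T,\ccw)=(\hat T,\cw)$ is Valdivia, which (again by \cite[p.~7]{K2}) forces its $\Sigma$-subset — and I would want this $\Sigma$-subset to be exactly $(T,\ccw)$ — to be Corson countably compact. The point is that $T$ is dense in $(\hat T,\cw)$ and consists of the points of countable-or-uncountable-cofinality type in $\hat T$; I would check, using the explicit description of the local bases, that $T$ is countably closed in $(\hat T,\cw)$, so that $T$ is contained in every $\Sigma$-subset, and conversely that $T$ contains a $\Sigma$-subset, whence $(T,\ccw)$ is itself Corson countably compact. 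Finally, to conclude (i), note that the identity map $(T,\ccw)\to (T,\cw)$ is continuous (since $\ccw\supseteq\cw$, because $I(T)\subseteq\{t:\cf(t)\neq\omega\}$) and onto, exhibiting $(T,\cw)$ as a continuous image of the Corson countably compact space $(T,\ccw)$; thus $(T,\cw)$ is weakly Corson. This last step also yields the assertion in Remark \ref{r: witness weak Corson}.

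\textbf{Main obstacle.} The delicate point is (i) $\Rightarrow$ (ii): passing from ``$(T,\cw)$ is a continuous image of a Corson countably compact $X$'' to ``$(T,\ccw)$ is Corson countably compact''. Lemma \ref{l: existencequotient} already supplies the crucial upgrade of $\p$ to a quotient map onto $(T,\ccw)$; what remains is the preservation of the property ``Corson countably compact'' under such quotients. I would handle this by the characterisation via $\beta$: $\beta\p\colon\beta X\to\beta(T,\ccw)$ maps the Valdivia compactum $\beta X$ (with $\Sigma$-subset $X$) onto $(\hat T,\cw)$ in such a way that the image of the $\Sigma$-subset is dense and countably closed, and then verify that this image is a $\Sigma$-subset of $(\hat T,\cw)$ using that $(\hat T,\cw)$, being a tree of height $\leq$ something amenable, has all its dense countably closed subsets as $\Sigma$-subsets — or, more robustly, by transporting the point-countable $T_0$-separating family along $\beta\p$. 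Pinning down this argument so that it works for arbitrary $\hat T$ (not just height $\omega_1+1$) is where care is needed.
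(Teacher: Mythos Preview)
Your overall architecture matches the paper's: Lemma \ref{l: existencequotient} for (i)$\Rightarrow$(ii), Theorem \ref{t: compactification} for (ii)$\Rightarrow$(iii), and the identity map $(T,\ccw)\to(T,\cw)$ to close the cycle. The implication (ii)$\Rightarrow$(iii) is exactly as in the paper. The other two implications, however, are where you diverge, and in both places you are working around a missing citation with an argument that is not complete.

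For (i)$\Rightarrow$(ii), after Lemma \ref{l: existencequotient} the paper simply invokes \cite[Lemma 2.1]{K1}: the class of Corson countably compact spaces is closed under quotient maps. That is the one-line fact you say you ``expect to either cite or reprove''. Your proposed reproof in the \emph{Main obstacle} paragraph has a genuine gap: you want to push the $\Sigma$-subset $X$ of $\beta X$ forward along $\beta\p$ and conclude that its image is a $\Sigma$-subset of $(\hat T,\cw)$. But the image of a $\Sigma$-subset under a continuous surjection is in general only dense and countably closed, not a $\Sigma$-subset; this is precisely the obstruction that makes ``weakly Valdivia'' a strictly larger class than ``Valdivia''. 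The fallback you suggest (that in $\hat T$ every dense countably closed subset is a $\Sigma$-subset) would itself require something like \cite[Theorem 2.3]{S3}, so you do not avoid the external input.

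For (iii)$\Rightarrow$(ii) the paper again uses a citation you are trying to reconstruct by hand: \cite[Theorem 2.3]{S3} says that for the $r_1$-tree $\hat T$ the set $D=\{t\in\hat T:\cf(t)<\omega_1\}$ is a $\Sigma$-subset, and this $D$ equals $T$ as a set; Proposition \ref{p: restrictionsigmatopology} then identifies the subspace topology on $D$ with $\ccw$. Your sketch instead argues that $T$ is dense and countably closed in $\hat T$ and hence ``is contained in every $\Sigma$-subset''. The containment goes the other way (a dense countably closed set \emph{contains} every $\Sigma$-subset, using that $\Sigma$-subsets are Fr\'echet--Urysohn), and even after fixing the direction, $D\subseteq T$ does not by itself force $T$ to be a $\Sigma$-subset. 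So here too the argument is not closed without the structural result on $\Sigma$-subsets of trees.
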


\begin{proof}
(i)$\Rightarrow$(ii) Since $T$ is weakly Corson,  there exist a Corson countably compact space $X$ and a continuous onto map $\varphi \colon X\to T$. Noting that Corson countably compact spaces are Fr\'{e}chet--Urysohn, Lemma \ref{l: existencequotient} implies that $(T,\ccw)$ is a quotient of $X$. According to \cite[Lemma 2.1]{K1}, the class of Corson countably compact spaces is closed under quotients, so we conclude that $(T,\ccw)$ is a Corson countably compact.\\
(ii)$\Rightarrow$(i) The $\ccw$-topology is finer than the coarse wedge topology on $T$, therefore the identity map $\mathrm{id} \colon (T,\ccw)\to (T,\cw)$ is continuous. Hence, if $(T,\ccw)$ is a Corson countably compact, then $(T,\cw)$ is weakly Corson.\\
(ii)$\Rightarrow$(iii) We mentioned already in the Introduction that the \v{C}ech--Stone compactification of a Corson countably compact is Valdivia (combining \cite[Lemma 1.8]{K2} with \cite[Proposition 1.9]{K2}). So, we obtain that $\beta(T,\ccw)$ is a Valdivia compact space and the assertion then follows from Theorem \ref{t: compactification}.\\
(iii)$\Rightarrow$(ii) By \cite[Theorem 2.3]{S3}, the set $D=\{t\in \hat{T}\colon\cf(t)< \omega_1\}$ is a $\Sigma$-subset of the $r_1$-tree $\hat{T}$. Moreover, Proposition \ref{p: restrictionsigmatopology} yields that $D$ is homeomorphic to $(T,\ccw)$. Therefore, $(T,\ccw)$ is a Corson countably compact.
\end{proof}

\begin{remark}\label{r: witness weak Corson} By Theorem \ref{t: tfaeValdCors}, if $(T,\cw)$ is weakly Corson, this property is witnessed by the Corson countably compact $(T,\ccw)$ and the identity map $\mathrm{id} \colon (T,\ccw)\to (T,\cw)$.
\end{remark}

\begin{remark} The above theorem can be considered the counterpart for trees of \cite[Theorem 3.5]{K1}, asserting that, for an ordinal $\eta$, $[0,\eta]$ is weakly Corson if and only if it is Valdivia (if and only if $\eta<\omega_2$). In light of this result, it is natural to ask whether we can replace $\hat{T}$ with $T$ in (iii) above. However, it turns out that this is not the case. For example, the tree in Example \ref{e: binaryTree} below is Valdivia but not weakly Corson. On the other hand, consider a tree $T$ with $\htte(T)=\omega_1+2$ and such that $|\Lev_\alpha(T)|=1$ if $\alpha\leq\omega_1$ and $|\Lev_{\omega_1+1}(T)| =\omega$. Then the tree $T$ is a weakly Corson compact by Theorem \ref{t: tfaeValdCors}, but it is not Valdivia, since it is not an $r$-tree (see also \cite[Example 1.10]{K2}).
\end{remark}

As an application of the previous results we obtain the following example of a hereditarily (weakly) Valdivia compact space that is not weakly Corson. This answers in the negative \cite[Question 2]{K1}.

\begin{example}\label{e: binaryTree}
Let $T$ be the full binary tree of height $\omega_1 +1$. By Proposition \ref{p: omega1Valdivia}, $T$ is hereditarily Valdivia; in particular, it is hereditarily weakly Valdivia. Then $\hat{T}$ is the subtree of the full binary tree of height $\omega_1+2$ such that $|\ims(t)|=2$ for every $t\in \hat{T}$ with $\htte(t,\hat{T})<\omega_1$, while every $t\in\Lev_{\omega_1}(\hat{T})$ admits a unique immediate successor. By \cite[Example 4.3]{S2}, $(\hat{T},\cw)$ is not Valdivia, therefore by Theorem \ref{t: tfaeValdCors}, $(T,\cw)$ is not a weakly Corson compact space.
\end{example}

\section{Some remarks on \texorpdfstring{$G_\delta$}{G delta} points}\label{s: gdelta}

In this section we show that each weakly Corson compact tree, endowed with the coarse wedge topology, has a dense set of $G_{\delta}$ points. This result is motivated by \cite[Question 1]{K1}, where the following question was posed:
\begin{question}\label{q: Gdelta}
Let $K$ be a weakly Corson compact space. Does $K$ have a dense set of $G_{\delta}$ points?
\end{question}
In \cite[Corollary 3.14]{TT}, Question \ref{q: Gdelta} is given an affirmative answer, under the assumption of the continuum hypothesis. Our result therefore shows that in the subclass of weakly Corson compact trees the result in \cite{TT} is valid inside ZFC; in particular, a possible counterexample must be sought outside the class of weakly Corson compact trees. In this section we only consider the topology $\cw$, also when not explicitly specified.

\begin{proposition}\label{p: Gdelta}
Let $T$ be a tree.  If $(T,\cw)$ is weakly Corson, then $(T,\cw)$ has a dense subset of $G_{\delta}$ points.
\end{proposition}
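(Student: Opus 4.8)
The goal is to produce a dense set of $G_\delta$ points in $(T,\cw)$ under the assumption that $T$ is weakly Corson. By Theorem \ref{t: tfaeValdCors}, the hypothesis gives that $(\hat T,\cw)$ is Valdivia, hence $\hat T$ is an $r$-tree; by the remark following Theorem \ref{t: compactification} we may as well assume $\hat T$ (and therefore, by $T=\tilde{\hat T}$ with the induced order, $T$ itself) is an $r_1$-tree. So from now on I work with an $r_1$-tree $T$. The plan is to identify explicitly which points of $T$ are $G_\delta$ in $(T,\cw)$ and then show this set is dense.

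**Step 1: which points are $G_\delta$.** I claim that $t\in T$ is a $G_\delta$ point of $(T,\cw)$ provided $\htte(t,T)$ is a successor ordinal or zero (equivalently $\cf(t)=0$), together with the condition that $t$ has countably many immediate successors — and in an $r_1$-tree one can also handle points with $\cf(t)=\omega$, whose basic neighbourhoods $W_s^F$ already involve only a countable cofinal sequence $s<t$ and a finite $F\subseteq\ims(t)$. Concretely: if $t\in I(T)$ and $\ims(t)$ is countable, enumerate $\ims(t)=\{r_n\}_{n}$ and note $\{t\}=\bigcap_n\big(V_t\setminus(V_{r_0}\cup\dots\cup V_{r_n})\big)=V_t\cap\bigcap_n(T\setminus V_{r_n})$, each factor being $\cw$-open since $t,r_n\in I(T)$; so $\{t\}$ is $G_\delta$. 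If $\cf(t)=\omega$, pick an increasing sequence $s_n\in I(T)$ with $\sup s_n=t$; since $T$ is an $r_1$-tree we need not worry about branching along the way, and if $\ims(t)$ is countable, enumerated as $\{r_n\}$, then $\{t\}=\bigcap_n\big(V_{s_n}\setminus(V_{r_0}\cup\dots\cup V_{r_n})\big)$, again a countable intersection of $\cw$-open sets. So \emph{every $t$ with $\cf(t)\le\omega$ and $|\ims(t)|\le\omega$ is a $G_\delta$ point}. I will call such points \emph{good}.

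**Step 2: density of the good points.** Here I must use weak Corsonness in an essential way (it is not automatic: the full binary tree of height $\omega_1+1$ has all its top-level points with $\cf=\omega_1$, and indeed it is not weakly Corson). The key is that by Theorem \ref{t: tfaeValdCors}(ii), $(T,\ccw)$ is Corson countably compact; in particular $(T,\ccw)$ — hence also $(T,\cw)$, which is a coarser topology — is ccc, or more usefully, a Corson countably compact space has countable spread / is monolithic, so no subspace of uncountable points can be discrete. Using this I argue: let $W_u^F$ be a nonempty basic $\cw$-open set; I must find a good point inside it. The set $\{t\in W_u^F : \cf(t)\ge\omega_1 \text{ or } |\ims(t)|>\omega\}$ — call it the set of \emph{bad} points of $W_u^F$ — I want to show is not all of $W_u^F$. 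Points with $|\ims(t)|>\omega$: in an $r_1$-tree these have $\cf(t)<\omega_1$; I handle them by replacing $t$ with one of its immediate successors, shrinking the neighbourhood, noting this cannot go on uncountably often without producing an uncountable discrete family (the sets $V_{r}$ for $r$ ranging over a subset of $\ims(t)$ are pairwise disjoint $\cw$-clopen), contradicting ccc of $(T,\cw)$ which follows from Corsonness of $(T,\ccw)$. Points with $\cf(t)\ge\omega_1$: pass to a predecessor $s<t$ with $s\in I(T)$, $s>u$, and $s\notin\bigcup F$ — possible since $\cf(t)\ge\omega_1$ — then $W_s^F$ is a strictly smaller basic neighbourhood; iterating, a transfinite descent through $V_u$ must terminate because $T$ is a tree (no infinite descending chains in $\hat s$, and at limit stages chain-completeness plus the uncountable-cofinality bookkeeping forces termination at a good point). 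Making the bookkeeping precise — organising these two reduction moves into a single transfinite recursion that provably halts at a good point — is the heart of the argument.

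**Main obstacle.** The real difficulty is Step 2: converting "weakly Corson" into a usable combinatorial constraint on the tree that forbids both long towers of uncountable-cofinality points and wide antichains, and then running the recursion cleanly. I expect the cleanest route is: show first that in a weakly Corson tree $T$, for \emph{every} $t$ there is a good point $s\ge t$ with $s\in V_t$ arbitrarily "high" — i.e. the good points are cofinal in each $V_t$ in the order sense — by exploiting that $(\hat T,\cw)$ Valdivia forces the set $\hat T\setminus I(\hat T)$ (points of uncountable cofinality in $\hat T$) to be, in a suitable sense, nowhere dense along branches (this is where \cite[Theorem 2.3]{S3} and the structure of the $\Sigma$-subset $D=\{t:\cf(t)<\omega_1\}$ enter, the same $D$ used in the proof of Theorem \ref{t: tfaeValdCors}(iii)). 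Once good points are order-cofinal in every $V_t$, density in the coarse wedge topology is immediate since every nonempty $\cw$-open set contains some $V_t$-tail minus finitely many cones, and one can find a good point in it by going high enough and then, if necessary, sidestepping the finitely many excluded immediate successors. I would present Step 1 as a self-contained lemma and then devote the bulk of the proof to Step 2 along these lines.
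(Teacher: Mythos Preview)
Your Step~1 is correct, and the paper uses the same observation (in contrapositive form). Step~2, however, contains a genuine error: the claim that weakly Corson implies ccc is false. The one-point compactification of a discrete set of size $\omega_1$ --- as a tree, a root with $\omega_1$ maximal immediate successors --- is Corson compact but not ccc; more directly, $[0,\omega_1]$ itself is weakly Corson (even Valdivia) and not ccc. So nothing in the hypothesis bounds $|\ims(t)|$, and your recursion of passing to immediate successors has no reason to terminate. The alternative suggestions (transfinite descent, order-cofinality of good points via the $\Sigma$-subset structure of $\hat T$) are never developed into an actual argument, and you yourself flag that the ``bookkeeping'' is the heart of the matter; as stated there is no mechanism preventing every point of some $V_s$ below level $\omega_1$ from having uncountably many successors.

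The paper's proof is by contradiction and uses a completely different mechanism. Assuming some basic open $W_t^F$ contains no $G_\delta$ point, one passes to a clopen $V_s\subseteq W_t^F$ for some $s\in\ims(t)\setminus F$ (possible since $t$, not being $G_\delta$, has uncountably many successors), and replaces $T$ by the weakly Corson tree $V_s$, thereby reducing to the case where $T$ has \emph{no} $G_\delta$ points whatsoever. By your Step~1 in contrapositive, every $t$ with $\htte(t,T)<\omega_1$ then has uncountably many immediate successors; one also checks $\htte(T)\ge\omega_1+1$ via Nyikos's characterisation of Corson trees. This abundance of branching lets one build, by a routine transfinite induction of length $\omega_1$, a closed subset $S\subseteq T$ homeomorphic to the full binary tree of height $\omega_1+1$. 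By Example~\ref{e: binaryTree}, $S$ is not weakly Corson; since the class of weakly Corson compacta is closed under closed subspaces, neither is $T$ --- contradiction. The key idea you are missing is this embedding: rather than searching directly for a $G_\delta$ point, the paper shows that their total absence forces the presence of the canonical non--weakly-Corson subspace already produced in Example~\ref{e: binaryTree}.
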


\begin{proof}
Suppose that $(T,\cw)$ does not have a dense subset of $G_{\delta}$ points. Then there exist an element $t\in I(T)$ and a finite set $F\subseteq \ims(t)$ such that $W_{t}^{F}$ does not contain $G_{\delta}$ points of $(T,\cw)$. Since $t\in T$ is not a $G_{\delta}$ point, then it has uncountably many immediate successors, therefore there exists $s \in \ims(t)\setminus F$. Hence, $V_s$ does not contain $G_{\delta}$ points of $T$. It also follows that $V_s$ contains no $G_\delta$ points for $V_s$ (because $V_s$ is clopen in $T$). Since the class of weakly Corson compact spaces is closed under closed subspaces \cite[Lemma 2.2]{K1}, $(V_s,\cw)$ is a weakly Corson tree as well. (Note that the restriction of the coarse wedge topology of $T$ to $V_s$ is the coarse wedge topology of $V_s$, by \cite[Lemma 2.1]{S3}.) Therefore, up to replacing $T$ with $V_s$ we may suppose without loss of generality that $(T,\cw)$ does not contain $G_{\delta}$ points. Next, we observe that $\htte(T)\geq\omega_1 +1$, or equivalently $T$ contains an $\omega_1$-chain. Indeed, if $T$ does not have $\omega_1$-chains, then by \cite[Theorem 2.8]{N2}, $T$ would be Corson and so it would have a dense subset of $G_{\delta}$ points, \cite[Theorem 3.3]{K2}.

If $\htte(t,T)<\omega_1$, then $|\ims(t)|>\omega$, because otherwise $t$ would be a $G_{\delta}$ point. We now build by transfinite induction a closed subset $S$ of $T$ as follows. We start with $S_0=\{0_T\}$. Fix $\alpha<\omega_1$, and suppose that for each $\beta<\alpha$, $S_{\beta}$ has been defined in such a way that $S_{\beta}$ is a full binary tree of height $\beta+1$, $S_\beta$ is an initial part of $T$, and $S_\gamma$ is an initial part of $S_\beta$ if $\gamma<\beta<\alpha$. If $\alpha=\beta +1$, then $S_{\alpha}\coloneqq S_{\beta}\cup\bigcup_{t\in\Lev_{\beta}(T)}\{s_1^t,s_2^t\}$, where $\{s_1^t,s_2^t\}\subseteq \ims(t)$ and $s_1^t\neq s_2^t$. If $\alpha$ is limit, then $S_{\alpha}=\overline{\bigcup_{\beta<\alpha}S_{\beta}}$. Finally, we set $S=\overline{\bigcup_{\alpha< \omega_1} S_{\alpha}}$. Since $S$ is homeomorphic to the full binary tree of height $\omega_1 +1$ endowed with the coarse wedge topology, that is not weakly Corson by Example \ref{e: binaryTree}, we conclude that $T$ is not weakly Corson.
\end{proof}

Finally we observe that Example \ref{e: binaryTree} also shows that the previous result can not be reversed. Indeed, the full binary tree $T$ of height $\omega_1 +1$ endowed with the coarse wedge topology is not weakly Corson, while on the other hand, $(T,\cw)$ has a dense subset of $G_{\delta}$ points. Indeed, if $t\in T$ and $\htte(t,T)$ is a successor ordinal, then $t$ is an isolated point, therefore it is a $G_{\delta}$ point.

\medskip
{\bf Acknowledgements.} The authors wish to express their gratitude to the anonymous referee for their insightful comments which substantially improved the paper.


\end{document}